\newtheorem{thm}{Theorem}[section]
\newtheorem{prop}[thm]{Proposition}
\newtheorem{con}[thm]{Conjecture}
\newtheorem*{prob*}{Open problem}
\theoremstyle{definition}
\theoremstyle{remark}
\newtheorem{rem}[thm]{Remark}
\newtheorem*{rem*}{Remark}
\DeclareMathOperator{\id}{id}
\newcommand{\kringel}{\mathbin{\raise1pt\hbox{$\scriptstyle\circ$}}}
\newcommand{\pkt}{\mathbin{\raise0pt\hbox{$\scriptstyle\bullet$}}}
\newcommand{\un}{\underline{n}}
\newcommand{\F}{\mathbb{F}}
\newcommand{\N}{\mathbb{N}}
\newcommand{\ad}{{\rm ad}}
\newcommand{\End}{{\rm End}}
\newcommand{\Der}{{\rm Der}}
\newcommand{\Out}{{\rm Out}}
\newcommand{\La}{\mathfrak{a}}
\newcommand{\Le}{\mathfrak{e}}
\newcommand{\Lf}{\mathfrak{f}}
\newcommand{\Lg}{\mathfrak{g}}
\newcommand{\Lh}{\mathfrak{h}}
\newcommand{\Ls}{\mathfrak{s}}
\newcommand{\Lt}{\mathfrak{t}}
\newcommand{\Lz}{\mathfrak{z}}
\newcommand{\CM}{\mathcal{M}}
\newcommand{\CO}{\mathcal{O}}
\newcommand{\abs}[1]{\lvert#1\rvert}
\newcommand{\al}{\alpha}
\newcommand{\be}{\beta}
\newcommand{\ga}{\gamma}
\newcommand{\de}{\delta}
\newcommand{\ep}{\varepsilon}
\newcommand{\la}{\lambda}
\newcommand{\om}{\omega}
\newcommand{\ov}{\overline}
\newcommand{\ra}{\rightarrow}
\renewcommand{\phi}{\varphi}
\begin{document}


\title[Zassenhaus conjecture]{Counterexamples to the Zassenhaus conjecture on simple modular Lie algebras}

\author[D. Burde]{Dietrich Burde}
\author[W. Moens]{Wolfgang Moens$^{\dagger}$}
\author[P. P\'aez-Guill\'an]{Pilar P\'aez-Guill\'an}
\address{Fakult\"at f\"ur Mathematik\\
Universit\"at Wien\\
  Oskar-Morgenstern-Platz 1\\
  1090 Wien \\
  Austria}
\email{dietrich.burde@univie.ac.at}
\address{Fakult\"at f\"ur Mathematik\\
/Universit\"at Wien\\
  Oskar-Morgenstern-Platz 1\\
  1090 Wien \\
  Austria}
\email{wolfgang.moens@univie.ac.at}
\address{Fakult\"at f\"ur Mathematik\\
Universit\"at Wien\\
  Oskar-Morgenstern-Platz 1\\
  1090 Wien \\
  Austria}
\email{maria.pilar.paez.guillan@univie.ac.at}

\date{\today}

\subjclass[2000]{Primary 17B30, 17D25}
\keywords{Simple modular Lie algebra, outer derivation, small characteristic}
\footnote{The second author died in May $2022$.}

\begin{abstract}
We provide an infinite family of counterexamples to the conjecture of Zassenhaus on the solvability of the outer
derivation algebra of a simple modular Lie algebra. In fact, we show that the simple modular
Lie algebras $H(2;(1,n))^{(2)}$ of dimension $3^{n+1}-2$ in characteristic $p=3$ do not have a solvable outer derivation algebra
for all $n\ge 1$. For $n=1$ this recovers the known counterexample of $\mathfrak{psl}_3(\F)$. We show that the outer
derivation algebra of $H(2;(1,n))^{(2)}$ is isomorphic to $(\mathfrak{sl}_2(\F)\ltimes V(2))\oplus \F^{n-1}$, where $V(2)$ is the
natural representation of $\mathfrak{sl}_2(\F)$. We also study other known simple Lie algebras in characteristic three, but
they do not yield a new counterexample.
 \end{abstract}

\maketitle

\section{Introduction}

We study a conjecture by Hans Zassenhaus, which says that the outer derivation algebra $\Out(\Lg)$ is 
{\em solvable} for all simple modular Lie algebras $\Lg$, over a field $\F$ of characteristic $p>0$.
Zassenhaus posed this conjecture in $1939$ in his work \cite{ZAS}. We have collected several results
on this conjecture from the literature, and proved some results in \cite{BU67}. For simple modular Lie algebras over an
algebraically closed field of characteristic $p>3$ the Zassenhaus conjecture is true. The outer derivation algebra
$\Out(\Lg)$ is solvable of derived length at most three. In characteristic $p=2$ and $p=3$, however, there is a counterexample
known in each case. For $p=3$ this is a  simple constituent of the classical Lie algebra $\Lg_2$, namely $\mathfrak{psl}_3(\F)$.
For $p=2$ it is a simple constituent of dimension $26$ of the classical Lie algebra $\Lf_4$. \\[0.2cm]
One motivation for us to study the Zassenhaus conjecture comes from commutative post-Lie algebra structures,
or {\em CPA-structures}, on finite-dimensional Lie algebras over a field $\F$, see \cite{BU67}. Indeed, every perfect
modular Lie algebra in characteristic $p>2$ having a solvable outer derivation algebra admits only the trivial CPA-structure.
Here CPA-structures are a special case of post-Lie algebra structures on Lie algebras, which have been studied in the context
of geometric structures on Lie groups,
\'etale representations of algebraic groups, deformation theory, homology of partition posets, Kozul operads, Yang-Baxter equations,
and many other topics. For references see \cite{BU41,BU44,BU51,BU52,BU57,VAL}. \\[0.2cm]
In this article we provide an infinite family of new counterexamples to the Zassenhaus conjecture in characteristic $3$.
We show that the Hamiltonian Lie algebras $H(2;(1,n))^{(2)}$, which are central simple modular Lie algebras in characteristic $3$
of dimension $3^{n+1}-2$, are counterexamples for all $n\ge 1$.
For $n=1$ we have the isomorphism $H(2;(1,1))^{(2)}\cong \mathfrak{psl}_3(\F)$, which recovers the known counterexample
in characteristic $3$. We show that there are no other counterexamples among the graded Hamiltonian Lie algebras
$H(2r;\underline{n})^{(2)}$ in characteristic $p\ge 3$. We also determine the structure of the outer derivation
algebra of $H(2r;\underline{n})^{(2)}$ in characteristic $p=3$. Finally, we study the Zassenhaus conjecture for known
simple Lie algebras of {\em non-standard type} over an algebraically closed field of characteristic three, such as Brown's
algebras $Br_8$ and $Br_{29}$, Kostrikin's series $L(\ep,\de,\rho)$ of dimension $10$, the Ermolaev algebras $R(\underline{n})$,
the Brown-Kuznetsov algebras $T(n)$ and several series of new simple Lie algebras of Skryabin. We do not find
new counterexamples there.

\section{Preliminaries}

Let $\Lg$ be a finite-dimensional Lie algebra over an arbitrary field $\F$. Denote by $\Der(\Lg)$ the derivation algebra
of $\Lg$ and by $\ad(\Lg)$ the ideal of inner derivations of the Lie algebra $\Der(\Lg)$. The quotient algebra
$\Out(\Lg)=\Der(\Lg)/\ad(\Lg)$ is called the {\em algebra of outer derivations} of $\Lg$.
Hans Zassenhaus posed in $1939$ in his work \cite{ZAS} on page $80$, between ``Satz $7$'' and ``Satz $8$'',
the following conjecture.

\begin{con}[Zassenhaus]
The outer derivation algebra $\Out(\Lg)$ of a simple Lie algebra $\Lg$ in prime characteristic is solvable.
\end{con}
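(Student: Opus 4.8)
The plan is to reduce the conjecture to a case-by-case verification on the classification of the finite-dimensional simple Lie algebras over the algebraically closed field $\F$. For $p>3$ the Block--Wilson--Strade--Premet classification asserts that every such $\Lg$ is of classical, Cartan, or Melikian type, so it suffices to compute $\Out(\Lg)$ for each family on this list and confirm that it is solvable; one expects, and the excerpt records, the derived length to be at most three. For the classical Lie algebras I would work with the root-space decomposition relative to a maximal torus: a derivation is pinned down by its action on the Cartan subalgebra and on the root vectors, and the Jacobi identity forces it to agree with an inner derivation up to an explicit correction, so that $\Out(\Lg)$ is either trivial or a small solvable piece arising from the central-quotient constructions. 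This part is essentially classical structure theory carried over to positive characteristic.

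For the graded Cartan-type algebras $W$, $S$, $H$, $K$ and their truncations I would exploit the natural grading $\Lg=\bigoplus_i \Lg_i$ and compute $\Der(\Lg)$ degree by degree. The negatively graded and degree-zero components reduce to linear algebra on the grading, while the positively graded components are tightly constrained by compatibility with both the Lie bracket and the divided-power structure. The outer contributions come from the lowest- and top-degree pieces together with the exceptional derivations built from $p$-th powers of the variables; in each family one then checks directly that $\Out(\Lg)$ is a torus acting on a nilpotent ideal, hence solvable. The main recurring obstacle here is bookkeeping: one must rule out, by an explicit degree argument, the appearance of a non-abelian semisimple summand in low degrees.

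The hard part --- and here the statement as literally worded cannot be established --- is precisely the small characteristics $p=2$ and $p=3$. There the classification is no longer exhausted by the three standard types, and the degree-by-degree computation can genuinely produce an extra outer derivation that refuses to close up into a solvable algebra: in the Hamiltonian family the low-degree part of $\Out(\Lg)$ acquires a copy of $\mathfrak{sl}_2(\F)$ together with its natural module, which is visibly non-solvable. Recognizing exactly when this happens, and in particular that it happens for $H(2;(1,n))^{(2)}$ in characteristic three, is the crux; it is what obstructs a proof of the conjecture in full generality and forces the restriction to $p>3$.
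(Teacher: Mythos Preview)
Your proposal is correct and aligns with the paper: you rightly recognize that the statement is a \emph{conjecture} which the paper does not prove but rather \emph{refutes}, and you identify the same mechanism --- for $p>3$ one confirms solvability family by family via the classification, whereas for $p=3$ the Hamiltonian algebras $H(2;(1,n))^{(2)}$ yield an outer derivation algebra containing $\mathfrak{sl}_2(\F)\ltimes V(2)$, which is non-solvable. Your sketch of the $p>3$ argument (classical types via root data, Cartan types via the grading and divided-power structure) matches the summary the paper gives of the known positive results.
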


For the conjecture we sometimes assume that $\Lg$ is defined over an algebraically closed field of
characteristic $p>0$, because then we can apply the classification results.
For characteristic zero, the corresponding conjecture is true, because then $\Out(\Lg)\cong H^1(\Lg,\Lg)=0$ for a simple Lie algebra
$\Lg$ by the first Whitehead Lemma. Clearly, this need not be true in prime characteristic, and indeed the outer derivation algebra
of a simple modular Lie algebra need not be trivial in general. 

\begin{rem}
The Zassenhaus conjecture for Lie algebras can be seen as an analogue of the {\em Schreier conjecture} for finite groups.
The Schreier conjecture asserts that the outer automorphism group of every finite simple non-abelian group is solvable. 
It was proposed by Otto Schreier in $1926$ and is known to be true as a result of the classification of finite 
simple groups. Up to now no simpler proof is known for it.
\end{rem}

What is known about the Zassenhaus conjecture? There are many different results in the literature, in particular,
in the context of the classification of simple modular Lie algebras over an algebraically closed field of characteristic
$p>3$. Let us summarize the main results, which we have collected in \cite{BU67}. A simple modular Lie algebra in the
classification is either of classical type, Cartan type, or of Melikian type in characteristic $p=5$. The results are
as follows.

\begin{prop}
Let $\Lg$ be a classical simple Lie algebra over a field $\F$ of characteristic 
$p>3$. Then $\Out(\Lg)=0$ unless $\Lg=\mathfrak{psl}_{n+1}(\F)$ with $p\mid n+1$ in which case 
$\Der(\Lg)\cong \mathfrak{pgl}_{n+1}(\F)$ and $\Out(\Lg)\cong \F$.
\end{prop}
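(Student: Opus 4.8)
The plan is to reduce the computation of $\Der(\Lg)$ for a classical simple $\Lg$ to the well-understood situation over fields of characteristic zero, using the fact that classical simple Lie algebras in characteristic $p > 3$ arise by reduction modulo $p$ of Chevalley bases. Concretely, I would first dispose of the types where $\Lg$ is simple and already equals its own derivation algebra with no centre: for $\Lg$ one of $\mathfrak{sl}_{n+1}(\F)$ with $p \nmid n+1$, $\mathfrak{so}_m(\F)$, $\mathfrak{sp}_{2n}(\F)$, or an exceptional type $G_2, F_4, E_6, E_7, E_8$, the root-space decomposition relative to a Cartan subalgebra $\Lh$ is available (here $p > 3$ is exactly what guarantees the Cartan matrix entries and structure constants behave as in characteristic zero, and that $\Lh$ is a torus of the expected rank). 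A derivation $D$ of $\Lg$ can then be analysed by the standard argument: after subtracting an inner derivation one may assume $D(\Lh) \subseteq \Lh$, and then $D$ preserves each root space $\Lg_\al$, acting on it by a scalar $c_\al$; the bracket relations $[\Lg_\al, \Lg_\be] = \Lg_{\al+\be}$ force $c_{\al+\be} = c_\al + c_\be$, so $\al \mapsto c_\al$ extends to an additive functional, and since the root lattice spans $\Lh^*$ over $\F$ (again using $p > 3$), $D$ agrees on $\Lg_\al$ with $\ad(h)$ for a suitable $h \in \Lh$. One concludes $D$ is inner, i.e. $\Out(\Lg) = 0$.

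The only case requiring extra care is $\Lg = \mathfrak{psl}_{n+1}(\F)$ with $p \mid n+1$. Here I would start from $\mathfrak{sl}_{n+1}(\F)$, which now contains the central ideal $\F I$ (the scalar matrices), and set $\Lg = \mathfrak{sl}_{n+1}(\F)/\F I$. Every derivation of $\mathfrak{pgl}_{n+1}(\F) = \mathfrak{gl}_{n+1}(\F)/\F I$ restricts to $\Lg$ (since $\Lg = [\mathfrak{pgl}_{n+1}, \mathfrak{pgl}_{n+1}]$ is characteristic), giving a map $\mathfrak{pgl}_{n+1}(\F) \hookrightarrow \Der(\Lg)$; the kernel is the centraliser of $\Lg$ in $\mathfrak{pgl}_{n+1}(\F)$, which is zero because $\Lg$ still acts with the full set of roots. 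So it remains to show this inclusion is onto, i.e. every derivation of $\Lg$ is induced by an element of $\mathfrak{pgl}_{n+1}(\F)$. I would again run the root-space argument: pick the image $\bar{\Lh}$ of the diagonal subalgebra, reduce to $D(\bar{\Lh}) \subseteq \bar{\Lh}$ modulo inner derivations, obtain scalars $c_\al$ on root spaces satisfying additivity, and extend $\al \mapsto c_\al$ to a linear functional $\phi$ on $\bar{\Lh}^*$. The point is that such $\phi$ corresponds to the adjoint action of a diagonal element of $\mathfrak{gl}_{n+1}(\F)$ modulo $\F I$ — and since $p \mid n+1$, the trace-zero condition no longer cuts down the diagonal torus, so the functionals realised by $\mathfrak{pgl}_{n+1}$ (dimension $n$) strictly exceed those realised by $\mathfrak{psl}_{n+1}$-inner derivations (dimension $n-1$ after killing the centre), producing exactly a one-dimensional $\Out(\Lg)$; a representative is $\ad$ of any diagonal matrix of nonzero trace, e.g. $E_{11}$.

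The main obstacle I anticipate is making the last dimension count rigorous rather than heuristic: one must verify that $\mathfrak{pgl}_{n+1}(\F)$ really does exhaust $\Der(\Lg)$ — equivalently, that a derivation whose effect on root spaces is already matched by a diagonal element, after subtracting that element, is genuinely inner in $\Lg$ — and that no "exotic" derivations arise from the collapse of the Killing form in characteristic $p \mid n+1$. Concretely this means checking that the torus-normalising reduction step still works when $\Lh$ is replaced by $\bar{\Lh}$ (its image has dimension $n$, not $n+1$, so there is no loss), and tracking the one extra dimension carefully through the quotient $\Der(\Lg)/\Inn(\Lg)$. I expect this can be cited from the classical literature on derivations of Lie algebras of classical type (e.g. the treatment via the classification, or Seligman/Block), so the proof in the paper is likely to be short, invoking those results; I would structure the write-up to first quote the known computation of $\Der$ for classical simple Lie algebras and then spell out only the $\mathfrak{psl}_{n+1}$ bookkeeping that yields $\Der(\Lg) \cong \mathfrak{pgl}_{n+1}(\F)$ and $\Out(\Lg) \cong \F$.
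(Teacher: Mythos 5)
The paper itself offers no proof of this proposition: it appears in the preliminaries as a summary of results collected in \cite{BU67}, which in turn rest on the classical computations of derivation algebras of classical simple modular Lie algebras (Jacobson, Seligman, Block, Hogeweij, Strade). So your closing expectation --- that the write-up would simply quote the known computation of $\Der(\Lg)$ and only spell out the $\mathfrak{psl}_{n+1}$ bookkeeping --- matches the paper's actual treatment, and there is no step-by-step argument in the paper to compare against. Your sketch of that bookkeeping is correct: with $p\mid n+1$ the scalars lie in $\mathfrak{sl}_{n+1}(\F)$, the adjoint action gives an embedding $\mathfrak{pgl}_{n+1}(\F)\hookrightarrow\Der(\mathfrak{psl}_{n+1}(\F))$ with trivial kernel, and the class of $\ad$ of a diagonal matrix of nonzero trace spans the one-dimensional quotient $\Out(\Lg)\cong\F$.

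Taken as a self-contained proof, however, your root-space argument has two genuine gaps, which are precisely the content you (like the paper) would have to delegate to the cited literature. First, the reduction ``after subtracting an inner derivation one may assume $D(\Lh)\subseteq\Lh$'' is not automatic in characteristic $p$: in characteristic zero it comes from Whitehead's lemma or conjugacy of Cartan subalgebras, and the usual modular substitute --- nondegeneracy of the Killing form --- is not available uniformly for $p>3$ (the Killing form of type $D_n$ degenerates when $p\mid n-1$, of type $B_n$ when $p\mid 2n-1$, of type $C_n$ when $p\mid n+1$, and of $\mathfrak{sl}_{n+1}$ exactly in the exceptional case $p\mid n+1$), so this normalization requires the finer toral and weight-space arguments of Block, Seligman or Hogeweij rather than the characteristic-zero template. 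Second, the additivity $c_{\al+\be}=c_\al+c_\be$ is only obtained for pairs with $\al+\be$ a root, so extending $\al\mapsto c_\al$ to a linear form on the span of the roots uses the structure of the root system; and after matching $D$ with $\ad(h)$ on all root spaces you must still show that $D-\ad(h)$ vanishes on $\Lh$ itself (for instance because $\Lh$ is spanned by the $[\Lg_\al,\Lg_{-\al}]$), and in the $\mathfrak{psl}$ case verify that $\mathfrak{pgl}_{n+1}(\F)$ exhausts $\Der(\Lg)$, which is exactly the surjectivity you flagged as the main obstacle. None of what you wrote is false, but as it stands it is an outline of the standard proof plus an appeal to the literature --- which is also all the paper does.
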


\begin{prop}
Let $\Lg$ be a simple Lie algebra of Cartan type over a field of characteristic $p>3$.
Then $\Out(\Lg)$ is solvable. More precisely, $\Out(\Lg)$ is solvable of derived length $d\le 1$ for type $W$ and type
$K$, of derived length $d\le 2$ for type $S$ and of derived length $d\le 3$ for type $H$. 
\end{prop}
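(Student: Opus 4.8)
The plan is to reduce the statement to the known explicit description of the derivation algebra of a graded Cartan type Lie algebra — classical, and collected in \cite{BU67} — and then to bound the derived length of $\Out(\Lg)=\Der(\Lg)/\ad(\Lg)$ type by type. By the classification of simple Lie algebras of Cartan type for $p>3$, $\Lg$ is isomorphic to one of $W(m;\underline{n})$, $S(m;\underline{n})^{(1)}$, $H(2r;\underline{n})^{(2)}$ or $K(2r+1;\underline{n})^{(1)}$, possibly in a deformed version $X(m;\underline{n};\Phi)$; since a deformation is induced by an automorphism of the divided power algebra $\CO(m;\underline{n})$, it alters neither the isomorphism type of $\Der(\Lg)$ nor its derived length, so it may be disregarded.

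The structural input, uniform over the four types, is that
\[
 \Der(\Lg)\;=\;\ad(\Lg^{\natural})\;+\;\CD,
\]
where $\Lg^{\natural}$ is the ``conformal completion'' of $\Lg$ inside $W(m;\underline{n})$ — namely $W(m;\underline{n})$ itself for type $W$, and $CS(m;\underline{n})$, $CH(2r;\underline{n})$, $K(2r+1;\underline{n})$ for types $S$, $H$, $K$, with $\Lg$ an ideal of $\Lg^{\natural}$ acting by $\ad$ — and $\CD=\sum_i\sum_{1\le k<n_i}\F\,\partial_i^{p^k}$ is the abelian subalgebra of higher partial derivatives (these are derivations of $\CO(m;\underline{n})$, hence of $W(m;\underline{n})$ and of each of its Cartan type subalgebras). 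I would then use two commutator facts: $\CD$ is abelian, and $[\Lg^{\natural},\CD]\subseteq[\Lg^{\natural},\Lg^{\natural}]$. The latter is elementary — $\operatorname{div}[D,\partial_i^{p^k}]=-\partial_i^{p^k}(\operatorname{div}D)$ vanishes when $\operatorname{div}D$ is constant, $[D,\partial_i^{p^k}]$ preserves the Hamiltonian (resp.\ contact) form whenever $D$ does (that form has constant coefficients, so $\partial_i^{p^k}$ annihilates it), and for the Euler derivation $E$ one has the characteristic-$p$ vanishing $[E,\partial_i^{p^k}]=-p^k\,\partial_i^{p^k}=0$. These force $\Out(\Lg)^{(1)}$ into the image of $[\Lg^{\natural},\Lg^{\natural}]$, hence by induction the $j$-th derived subalgebra of $\Out(\Lg)$ into the image of the $j$-th derived subalgebra of $\Lg^{\natural}$. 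Since the simple algebra $\Lg$ is itself the $s$-th derived subalgebra of $\Lg^{\natural}$, with $s\le1$ for types $W$ and $K$, $s=2$ for type $S$ and $s=3$ for type $H$, and since $\CD$ is abelian, we obtain $d\le\max\{1,s\}$: that is $d\le1$ for types $W$ and $K$, $d\le2$ for type $S$, and $d\le3$ for type $H$.

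The step I expect to be the main obstacle is the structural input itself in the subtler cases $S$ and especially $H$: determining exactly which conformal and divergence derivations of $W(m;\underline{n})$, and which of the top-degree elements thrown up by the derived series, act as genuine outer derivations of the simple constituent, and checking that there are no further, exceptional, outer derivations. For type $H$ this delicacy is not incidental: in characteristic $p=3$ the degree-zero part acquires extra outer derivations, $\Out(H(2r;\underline{n})^{(2)})$ ceases to be solvable, and one obtains precisely the family of counterexamples this paper is devoted to — so the argument genuinely needs $p>3$. Granting the description of $\Der(\Lg)$, the solvability of $\Out(\Lg)$ and the stated bounds on its derived length follow by the elementary commutator bookkeeping above.
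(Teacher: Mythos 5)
The paper itself offers no proof of this proposition beyond citing the derivation computations of Celousov and Strade (Theorem 7.1.2) as collected in \cite{BU67}, and your skeleton --- take $\Der(\Lg)=\ad(\Lg^{\natural})+\CD$ with $\CD$ the span of the higher partials $\partial_i^{p^k}$, then bound the derived series of $\Out(\Lg)$ by noting $\CD$ is abelian and that the derived series of $\Lg^{\natural}$ terminates in $\Lg$ after at most $1,1,2,3$ steps for types $W,K,S,H$ --- is exactly the route that underlies that reference. The commutator bookkeeping for the graded algebras is essentially sound: what you really need (and what suffices for $W$, $S$, $H$) is the weaker containment $[\Lg^{\natural},\CD]\subseteq \Lg^{\natural}$ landing in the divergence-zero (resp.\ form-annihilating) subalgebra, after which $\Out(\Lg)^{(j)}$ sits in the image of $\ad$ of the $j$-th derived subalgebra of $\Lg^{\natural}$ plus $\Lg$, and the stated bounds drop out.

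Two steps, however, are genuinely flawed as written. First, your dismissal of the filtered deformations is based on a false claim: a deformation $X(m;\underline{n};\Phi)$ is \emph{not} obtained from the graded algebra by an automorphism of $\CO(m;\underline{n})$, and in general it is not isomorphic to it --- the deformed Hamiltonian algebras, for instance, have dimension $p^{\abs{n}}-1$ or $p^{\abs{n}}$ rather than $p^{\abs{n}}-2$, and their derivation algebras differ (the paper's own example $Br_8\cong H(2;(1,1),\omega)$ illustrates this). Since ``Cartan type'' in the classification for $p>3$ includes the filtered algebras, your reduction does not cover them; one must instead invoke the separate known computations of $\Der$ for the deformed algebras, where the same solvability bounds hold. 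Second, for type $K$ your justification is wrong in two ways: the contact form does not have constant coefficients ($\omega=dx_{2r+1}+\sum\sigma(i)x_i\,dx_{i'}$), and $K(2r+1;\underline{n})$ is defined by $D(\omega)\in\CO(2r+1;\underline{n})\omega$, not by $D(\omega)=0$. Moreover, to get $d\le 1$ when $p\mid 2r+4$ (so that $\Lg=K^{(1)}$ has codimension one in $K$) you need the strong containment $[K,\partial_i^{p^k}]\subseteq K^{(1)}$, not merely $[K,\partial_i^{p^k}]\subseteq K$; your argument as given would only yield $d\le 2$ in that case. This containment is true and follows from the explicit realization of $K^{(1)}$ on $\CO(2r+1;\underline{n})$, but it must be checked rather than deduced from a constant-coefficient argument.
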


\begin{prop}
Let $\CM=\CM (n_1,n_2)$ be a Melikian algebra of dimension $5^{n_1+n_2+1}$ over a field of
characteristic $5$. Then $\Out(\CM)$ is abelian.
\end{prop}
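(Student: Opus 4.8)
The plan is to deduce the statement from an explicit description of the full derivation algebra $\Der(\CM)$ and then to check, by a short direct computation, that the induced bracket on $\Out(\CM)$ vanishes. First I would recall the standard realization of the Melikian algebra: in characteristic $5$ the algebra $\CM=\CM(n_1,n_2)$ carries a $\Z$-grading and decomposes, as a vector space, into two copies of the Witt algebra $W(2;\underline{n})$ and one copy of the divided power algebra $\CO(2;\underline{n})$, with the (characteristic-$5$ specific) Melikian bracket coupling the three summands; a dimension count recovers $\dim\CM=5^{n_1+n_2+1}$. Since $\CM$ is simple, $\Inn(\CM)=\ad\,\CM\cong\CM$, so $\Out(\CM)=\Der(\CM)/\ad\,\CM$.

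Next I would invoke the known computation of $\Der(\CM)$ from the classification literature (see \cite{BU67}): as a Lie algebra it is a semidirect product
\[
\Der(\CM(n_1,n_2))=\ad\,\CM(n_1,n_2)\rtimes\CD ,
\]
where $\CD$ is the subalgebra spanned by the canonical lifts to $\CM$ of the higher divided-power derivations $\partial_1^{5^j}$ $(1\le j<n_1)$ and $\partial_2^{5^j}$ $(1\le j<n_2)$ of $\CO(2;\underline{n})$, possibly together with the degree derivation of the $\Z$-grading. Then $\Out(\CM)\cong\CD$ as Lie algebras, and it remains to show that $\CD$ is abelian.

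For this I would use that ordinary partial derivatives commute, so that $[\partial_i^{5^j},\partial_k^{5^l}]=0$ already on $\CO(2;\underline{n})$; since the action of these operators on the two Witt summands of $\CM$ is built functorially from their action on $\CO(2;\underline{n})$, their lifts again commute (or at worst bracket into $\ad\,\CM$, which is all that is needed). The degree derivation $E$, if it occurs, scales each homogeneous lift $\widehat{\partial_i^{5^j}}$ by a fixed multiple of $5^j$, and $5^j\equiv 0\pmod 5$, so $[E,\widehat{\partial_i^{5^j}}]=0$. Hence $[\CD,\CD]\subseteq\ad\,\CM$, the induced bracket on $\Out(\CM)\cong\CD$ is trivial, and $\Out(\CM)$ is abelian.

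The hard part will be supplying the input on $\Der(\CM)$: one has to verify that the candidate maps are derivations of the \emph{full} Melikian bracket, not merely of its Witt part; that the list above is \emph{complete}, i.e.\ that no further outer derivation arises (for instance from $p$-envelope considerations); and that brackets of the lifts genuinely stay inside $\ad\,\CM$. Establishing completeness of a basis of $\Der(\CM)$ modulo $\ad\,\CM$ is precisely what makes the ``abelian'' conclusion sharp, so this is where the real work lies — unless one simply quotes it from the classification literature.
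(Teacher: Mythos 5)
Your proposal is correct and takes essentially the same route as the paper: the paper does not prove this proposition itself but quotes it from \cite{BU67}, whose argument rests on exactly the input you describe, namely the known computation $\Der(\CM)=\ad\CM\oplus\bigoplus_{i=1,2}\bigoplus_{0<j<n_i}\F\,\partial_i^{5^j}$ (see \cite{STR1}), after which the pairwise commuting of the $\partial_i^{5^j}$ (and the degree derivation being inner, which your $5^j\equiv 0 \bmod 5$ remark covers anyway) yields that $\Out(\CM)$ is abelian. The only substantive step is the completeness of that description of $\Der(\CM)$, which you correctly flag as the part to be quoted from the classification literature, just as the paper does.
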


So the Zassenhaus conjecture has a positive answer for algebraically closed fields of characteristic $p>3$:

\begin{thm}
Let $\Lg$ be a simple modular Lie algebra over an algebraically closed field of characteristic $p>3$. Then
$\Out(\Lg)$ is solvable of derived length at most three.
\end{thm}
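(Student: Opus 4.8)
The plan is to deduce this theorem purely from the three preceding propositions together with the classification of simple modular Lie algebras over an algebraically closed field of characteristic $p>3$. The classification theorem (Block--Wilson--Strade--Premet) asserts that every such $\Lg$ is of classical type, of Cartan type ($W$, $S$, $H$ or $K$), or of Melikian type (only when $p=5$). So the proof is a short case analysis: in each case we invoke the corresponding proposition and read off a bound on the derived length of $\Out(\Lg)$.

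\begin{proof}
By the classification of finite-dimensional simple modular Lie algebras over an algebraically closed field of characteristic $p>3$, the Lie algebra $\Lg$ is either of classical type, of Cartan type $W$, $S$, $H$ or $K$, or (when $p=5$) of Melikian type. We treat the cases separately.

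If $\Lg$ is classical, then by Proposition~2.4 either $\Out(\Lg)=0$, or $\Lg\cong\mathfrak{psl}_{n+1}(\F)$ with $p\mid n+1$ and $\Out(\Lg)\cong\F$; in both cases $\Out(\Lg)$ is abelian, hence solvable of derived length $d\le 1$.

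If $\Lg$ is of Cartan type, then by Proposition~2.5 the algebra $\Out(\Lg)$ is solvable of derived length $d\le 1$ for types $W$ and $K$, $d\le 2$ for type $S$, and $d\le 3$ for type $H$; in all cases $d\le 3$.

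Finally, if $p=5$ and $\Lg\cong\CM(n_1,n_2)$ is of Melikian type, then by Proposition~2.6 the algebra $\Out(\Lg)$ is abelian, so $d\le 1$.

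In every case $\Out(\Lg)$ is solvable of derived length at most three.
\end{proof}

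The only genuine content being imported is the classification theorem itself, which is cited rather than proved, together with Propositions~2.4--2.6; so the ``hard part'' here is not in this theorem at all but lies upstream, in establishing those propositions (especially the derived-length-three bound for type $H$). The present statement is essentially a bookkeeping corollary: the one point requiring a little care is making sure the case list is exhaustive and that the characteristic restriction $p>3$ is exactly what the classification needs (in particular that Melikian algebras occur only at $p=5$, which is within the stated range and so must be included). Since the sharpest bound among the cases is the $d\le 3$ coming from Cartan type $H$, the uniform conclusion ``derived length at most three'' is the best one can extract from these inputs, and it is attained.
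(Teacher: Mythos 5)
Your proposal is correct and follows exactly the route the paper intends: the theorem is stated there as the immediate consequence of the classification of simple modular Lie algebras for $p>3$ combined with the three preceding propositions on classical, Cartan, and Melikian type. Your case analysis and the resulting bound $d\le 3$ (attained in type $H$) match the paper's implicit argument.
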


Recall that a Lie algebra $\Lg$ over $K$ is called {\em central simple} if its centroid
coincides with $K$. Here the centroid is the space of all $K$-linear maps $\phi\colon \Lg \ra \Lg$ 
commuting with all inner derivations. If $\Lg$ is a central simple Lie algebra over an arbitrary 
field $\F$ of characteristic $p>3$, then $\Lg \otimes_{\F}\ov{\F}$ is simple over $\ov{\F}$. 
Hence the Zassenhaus conjecture also holds for central simple
Lie algebras over an arbitrary field of characteristic $p>3$. \\[0.2cm]
However, in characteristic $p=3$ there is one known counterexample to the Zassenhaus conjecture. The same is true for
$p=2$. We will show in the next section that there exists a whole family of counterexamples for $p=3$ of dimension
$3^{n+1}-2$ for all $n\ge 1$.

\section{Simple modular Lie algebras in characteristic three}

We want to study the Zassenhaus conjecture for simple modular Lie algebras of characteristic $p=3$. For the theory
of modular Lie algebras, see for example \cite{SEL}.
First we recall that there is a counterexample, see \cite{BU67}, Proposition $3.5$.

\begin{prop}\label{3.1}
Let $\F$ be a field of characteristic $p=3$. Then the derivation algebra of $\Lg=\mathfrak{psl}_3(\F)$ is isomorphic to
the exceptional Lie algebra $\Lg_2$, and the quotient by $\ad(\Lg)\cong \Lg$ is given by $\Out(\Lg)\cong \Lg$. In particular
the outer derivation algebra of $\Lg$ is simple and non-solvable.
\end{prop}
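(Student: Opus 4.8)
The plan is to work inside the characteristic-$3$ Lie algebra $\Lg_2$ and exhibit $\mathfrak{psl}_3(\F)$ explicitly as a subalgebra, then identify the centralizer structure. First I would recall the root-space decomposition of $\Lg_2$ relative to a Cartan subalgebra $\Lh$: the six long roots together with $\Lh$ span a subalgebra of type $A_2$, i.e.\ a copy of $\mathfrak{sl}_3(\F)$, while the six short root spaces span a complementary $6$-dimensional subspace $\Lm$. In characteristic $3$ the subalgebra of type $A_2$ is not $\mathfrak{sl}_3(\F)$ on the nose as a simple algebra, because the trace form degenerates: the identity lies in $\mathfrak{sl}_3(\F)$ when $3\mid 3$, so the center $\F\cdot\id$ is an ideal and $\mathfrak{psl}_3(\F)=\mathfrak{sl}_3(\F)/\F\id$ is the simple quotient, of dimension $8-1=7$. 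The key structural point, which I would verify by a bracket computation in the Chevalley basis, is that inside $\Lg_2$ the seven-dimensional simple algebra $\mathfrak{psl}_3(\F)$ appears genuinely as a \emph{subalgebra} (the short-root vectors and the rescaled long-root vectors close up so that the would-be central element drops out), and that it acts on the ambient $14$-dimensional space; one then checks $\dim\Lg_2 = 14 = 7+7$, consistent with $\Lg_2$ being built from two copies of this module.

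The core of the proof is the computation of $\Der(\Lg)$ for $\Lg=\mathfrak{psl}_3(\F)$. I would proceed in two inclusions. For $\Der(\Lg)\supseteq\Lg_2$: since $\Lg$ sits as a subalgebra of $\Lg_2$ and $\Lg_2$ is simple (hence equals its own inner derivation algebra), restriction to $\Lg$ of $\ad_{\Lg_2}(x)$ for $x\in\Lg_2$ preserving $\Lg$ gives derivations of $\Lg$; I would check that the normalizer of $\Lg$ in $\Lg_2$ is all of $\Lg_2$, equivalently that $\Lg$ is an ideal-free subalgebra whose $\ad$-action extends, by noting that $\Lg_2$ decomposes as $\Lg$-module into $\Lg\oplus M$ with $M$ a $7$-dimensional module and $[\Lg,M]\subseteq M$, so every $\ad_{\Lg_2}(x)$ restricts to a derivation of $\Lg$, yielding a Lie algebra homomorphism $\Lg_2\to\Der(\Lg)$ which is injective by simplicity of $\Lg_2$. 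For the reverse inclusion $\Der(\Lg)\subseteq\Lg_2$: one computes $\dim\Der(\Lg)$ directly. The cleanest route is to observe that $\Der(\mathfrak{sl}_3(\F))$ in characteristic $3$ is $\mathfrak{pgl}_3(\F)$ of dimension $8$ together with the extra derivations coming from the degeneracy; passing to the quotient $\mathfrak{psl}_3=\mathfrak{sl}_3/\F\id$, derivations of the quotient are computed from derivations of $\mathfrak{sl}_3$ that kill the ideal $\F\id$, plus a contribution from $\Hom(\mathfrak{psl}_3,\F\id)\cong H^1$ type terms; assembling these gives exactly $\dim=14$. I would instead prefer the shortcut: since $\Lg_2\hookrightarrow\Der(\Lg)$ is an embedding of a $14$-dimensional simple algebra, and a direct rank/weight count (or citation of the known fact, e.g.\ from \cite{SEL} or the tables in \cite{BU67}) shows $\dim\Der(\Lg)=14$, the embedding is an isomorphism $\Der(\Lg)\cong\Lg_2$.

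Given $\Der(\Lg)\cong\Lg_2$, the remaining claims follow quickly. Since $\Lg=\mathfrak{psl}_3(\F)$ is simple, it is centerless, so $\Inn(\Lg)=\ad(\Lg)\cong\Lg$, a $7$-dimensional subalgebra of the $14$-dimensional $\Der(\Lg)\cong\Lg_2$. As a module over $\Inn(\Lg)\cong\Lg$ (acting by the adjoint representation inside $\Lg_2$), we have the decomposition $\Lg_2=\Lg\oplus M$ established above, and $M$ is stable under $\ad(\Lg)$; hence as a $\Lg$-module $\Out(\Lg)=\Der(\Lg)/\Inn(\Lg)\cong M$. To see that this quotient is not merely $\Lg$-isomorphic but Lie-algebra isomorphic to $\Lg$, I would argue that $\Inn(\Lg)$ is \emph{not} an ideal of $\Der(\Lg)$: if it were, $\Lg_2$ would have a $7$-dimensional ideal, contradicting its simplicity. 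Therefore $[\Der(\Lg),\Inn(\Lg)]\not\subseteq\Inn(\Lg)$, and one examines the induced bracket on the $7$-dimensional quotient $M$; using $[\Lg,M]\subseteq M$ and $[M,M]\subseteq\Lg$, the quotient bracket $[\bar m_1,\bar m_2]:=\overline{[m_1,m_2]}$ where the bar denotes image in $\Der(\Lg)/\Inn(\Lg)$ is \emph{zero} if one is not careful — so the correct statement is that $\Out(\Lg)$ carries the quotient bracket inherited from $\Der(\Lg)$, and the claim of simplicity must come from identifying $\Out(\Lg)$ with $M$ equipped with a different (twisted) bracket. The cleanest fix is to note that because $\Lg_2$ is simple and $\Lg$ is not an ideal, there is in fact a \emph{complementary} copy: the subspace spanned suitably so that $\Der(\Lg)\cong\Lg\oplus\Lg$ as a direct sum of the identity-acting copy and a twisted copy, whence $\Out(\Lg)\cong\Lg$ as Lie algebras, and $\Lg=\mathfrak{psl}_3(\F)$ is simple and non-solvable. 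The main obstacle I anticipate is exactly this last point: \textbf{carefully identifying the Lie algebra structure on $\Out(\Lg)$} rather than just its module structure, i.e.\ showing the short exact sequence $0\to\Inn(\Lg)\to\Der(\Lg)\to\Out(\Lg)\to 0$ splits as Lie algebras with $\Out(\Lg)\cong\Lg$; everything else is bracket bookkeeping in the Chevalley basis of $\Lg_2$ in characteristic $3$, which I would not grind through here.
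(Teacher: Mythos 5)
The paper itself does not prove Proposition \ref{3.1}; it quotes it from \cite{BU67}, Proposition 3.6. Measured against the known argument, your proposal contains two genuine errors that sink its endgame. First, $\Lg_2$ is \emph{not} simple in characteristic $3$: as this paper recalls in the discussion of classical type, $\Lg_2$ has a $7$-dimensional simple ideal $I\cong\mathfrak{psl}_3(\F)$, generated by the short root spaces, with $\Lg_2/I\cong I$. You invoke the simplicity of $\Lg_2$ repeatedly (injectivity of $\Lg_2\to\Der(\Lg)$, the claim that it ``equals its own inner derivation algebra'', and the final contradiction argument), so those steps fail as stated. Second, and more seriously, your claim that $\Inn(\Lg)$ is not an ideal of $\Der(\Lg)$ is false for every Lie algebra: $[D,\ad x]=\ad(D(x))$, so the inner derivations always form an ideal of the derivation algebra. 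The paradox you run into is resolved precisely by the first point: $\Inn(\Lg)\cong\Lg$ (since $\Lg$ is simple, hence centerless) sits inside $\Der(\Lg)\cong\Lg_2$ as the ideal $I$, there is no conflict with simplicity because $\Lg_2$ is not simple, and $\Out(\Lg)=\Der(\Lg)/\Inn(\Lg)\cong\Lg_2/I\cong\mathfrak{psl}_3(\F)$ carries the ordinary quotient bracket --- no ``twisted bracket'' is needed, and your claimed Lie-algebra splitting $\Der(\Lg)\cong\Lg\oplus\Lg$ is unjustified, since the argument offered for it rests on the two false premises above.

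There is also a confusion about which copy of $\mathfrak{psl}_3(\F)$ is relevant: the long roots of $\Lg_2$ together with the Cartan subalgebra span an $8$-dimensional subalgebra of type $A_2$ (a subalgebra, not an ideal), whereas the copy realizing $\Inn(\Lg)$ is the ideal $I$ generated by the short root spaces. Your decomposition $\Lg_2=\Lg\oplus M$ with $[\Lg,M]\subseteq M$ and $[M,M]\subseteq\Lg$ pertains to the long-root subalgebra and is irrelevant to the quotient structure; since $I$ is an ideal, the induced bracket on $\Lg_2/I$ is automatically well defined and is the one that matters. Finally, the only genuinely nontrivial computational input, $\dim\Der(\mathfrak{psl}_3(\F))=14$ and the identification $\Der(\Lg)\cong\Lg_2$, is exactly what you defer to a citation --- which is also what the paper does. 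The correct short argument is: cite or verify $\Der(\Lg)\cong\Lg_2$; observe that $\Inn(\Lg)\cong\Lg$ is an ideal of $\Der(\Lg)$ identified with the short-root ideal $I$ of $\Lg_2$; conclude $\Out(\Lg)\cong\Lg_2/I\cong\Lg$, which is simple and non-solvable.
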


The next question then is, whether or not there are more counterexamples in characteristic $p=3$. 
Here we distinguish {\em Lie algebras of standard type} (i.e., Lie algebras of classical or Cartan type) and
{\em Lie algebras of non-standard type}.

\subsection{Classical type}
For $p>3$ the list of classical simple modular Lie algebras is given by
\[
\mathfrak{sl}_n(\F), p\nmid n, \; \mathfrak{psl}_n(\F),p\mid n, \; \mathfrak{so}_n(\F), \mathfrak{sp}_{2n}(\F),\Lg_2,\Lf_4,
\Le_6,\Le_7,\Le_8.
\]  
For $p=3$ these Lie algebras are still simple, except for $\Lg_2$ and $\Le_6$. In fact, $\Lg_2$ has a simple ideal
$I\cong \mathfrak{psl}_3(\F)$, generated by the short roots, with $\Lg_2/I\cong I$. This leads to the counterexample mentioned above.
The algebra $\Le_6$ has a $1$-dimensional center so that $\Le_6/\Lz$ is a simple modular Lie algebra of dimension $77$ in
characteristic $3$. Its derivation algebra is abelian, so that we do not obtain another counterexample.
It turns out that for classical simple Lie algebras in characteristic $3$ there are no counterexamples,
except for $\Lg_2/I \cong  \mathfrak{psl}_3(\F)$ discussed above. Indeed, we have the following results, see \cite{BU67}:

\begin{prop}
Let $\F$ be a field of characteristic $3$ and $\Lg$ be a simple Lie algebra of
classical type different from $\mathfrak{psl}_{3m}(\F)$ and $\Le_6/\Lz$. Then $\Out(\Lg)=0$.
\end{prop}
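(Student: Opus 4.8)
The strategy is to prove the stronger statement $\Der(\Lg)=\Inn(\Lg)$; since every $\Lg$ in question is centerless (as verified below), one then has $\Inn(\Lg)\cong\Lg$ and $\Out(\Lg)=\Der(\Lg)/\Inn(\Lg)=0$. Concretely, the algebras to be treated are the classical simple Lie algebras in characteristic $3$ not excluded by the hypothesis, namely those of type $A_n$ with $3\nmid n+1$, of type $B_n$, $C_n$, $D_n$, and the exceptional $\Lf_4$, $\Le_7$, $\Le_8$ (the only other classical simple Lie algebras in characteristic $3$ are $\mathfrak{psl}_{3m}(\F)$ and the $77$-dimensional $\Le_6/\Lz$, both excluded, while $\Lg_2$ is not simple).

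First I would run the same argument that establishes the corresponding statement for characteristic $p>3$. Fix a maximal torus $\Lh$ and the root space decomposition $\Lg=\Lh\oplus\bigoplus_{\al\in\Phi}\Lg_\al$, with $\dim_\F\Lg_\al=1$ and $\Lh$ self-centralizing. For each additive character $\chi\colon Q\to\F$ of the root lattice $Q=\Z\Phi$, the linear map $D_\chi$ given by $D_\chi|_\Lh=0$ and $D_\chi(x)=\chi(\al)x$ on $\Lg_\al$ is a derivation. Now let $D\in\Der(\Lg)$ be arbitrary. By a standard reduction we may assume, after subtracting a suitable inner derivation, that $D(\Lh)\subseteq\Lh$ (see \cite{SEL}). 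Evaluating the derivation rule on $[h,e_\al]=\al(h)e_\al$, and using that every root is nonzero on $\Lh$ because $\al(h_\al)=\langle\al,\al^\vee\rangle=2$, one then obtains $D|_\Lh=0$, $D(\Lg_\al)\subseteq\Lg_\al$, and hence $De_\al=c_\al e_\al$ with $c_\al\in\F$. Evaluating the rule on $[e_\al,e_{-\al}]=h_\al\ne 0$ and on $[e_\al,e_\be]=N_{\al,\be}e_{\al+\be}$ for $\al+\be\in\Phi$ yields $c_{-\al}=-c_\al$ and, whenever $N_{\al,\be}\ne 0$, $c_{\al+\be}=c_\al+c_\be$; under the latter condition $\al\mapsto c_\al$ extends to a character $\chi$ of $Q$ and $D=D_\chi$.

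The crux is that the two arithmetic inputs of this argument survive in characteristic $3$ for the algebras on our list. (i) $Z(\Lg)=0$ and the evaluation map $\Lh\to\Hom_\Z(Q,\F)$, $h\mapsto(\al\mapsto\al(h))$, is an isomorphism, equivalently the Cartan matrix is invertible over $\F$; this holds because the index of connection (the determinant of the Cartan matrix, i.e.\ the order of the fundamental group) equals $n+1,2,2,4,1,2,1$ for $A_n,B_n,C_n,D_n,\Lf_4,\Le_7,\Le_8$ respectively, all coprime to $3$ once $3\nmid n+1$. Consequently every $D_\chi$ equals $\ad h$ for the unique $h\in\Lh$ with $\al(h)=\chi(\al)$ for all $\al$, so $D_\chi$ is inner; combined with the previous paragraph this gives $\Der(\Lg)=\Inn(\Lg)$. (ii) $N_{\al,\be}\ne 0$ whenever $\al,\be,\al+\be\in\Phi$; indeed $\abs{N_{\al,\be}}\in\{1,2\}$ for all these types, since $\abs{N_{\al,\be}}=3$ occurs only for $\Lg_2$, and $2\ne 0$ in characteristic $3$. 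Hence $\Out(\Lg)=0$.

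I expect the main obstacle to be the reduction to $D(\Lh)\subseteq\Lh$ modulo an inner derivation: in characteristic $p$ this rests on the theory of restricted Lie algebras and the conjugacy of maximal toral subalgebras, not on a nondegenerate Killing form (which is unavailable, the Killing forms of $\Lf_4$, $\Le_7$, $\Le_8$ being degenerate in characteristic $3$), and it is precisely here, together with the nonvanishing of the $N_{\al,\be}$, that characteristic $3$ has to be inspected; the failure of the latter for $\Lg_2$ is exactly what produces the extra outer derivations behind the known counterexample. For $\Lf_4$, $\Le_7$, $\Le_8$ one can alternatively just quote the known determination of their derivation algebras in arbitrary characteristic.
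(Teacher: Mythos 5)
Your argument is correct in substance, but it is worth noting that the paper does not actually prove this proposition: it is quoted from \cite{BU67}, where it ultimately rests on the known determination of the derivation algebras of the Chevalley algebras in small characteristic. Your proposal instead reproves it by the classical root-space argument, and the two arithmetic inputs you isolate are exactly the right ones and are correct: the index of connection of $A_n$ ($3\nmid n+1$), $B_n$, $C_n$, $D_n$, $\Lf_4$, $\Le_7$, $\Le_8$ is prime to $3$, and for these types $\abs{N_{\al,\be}}\le 2$ whenever $\al+\be$ is a root (the string through $\be$ then has at most one term below $\be$, so $N_{\al,\be}=\pm(r+1)$ with $r\le 1$; only $\Lg_2$ gives $\pm 3$), so the additivity relations propagate and $\al\mapsto c_\al$ extends to a character of $Q$, which is realized by a unique $h\in\Lh$ because the Cartan matrix is invertible over $\F$. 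Two small points deserve care. First, the step ``$D|_\Lh=0$'' does not follow from $\al(h_\al)=2\neq 0$ alone; it needs the roots to span $\Lh^*$, i.e.\ precisely the invertibility of the Cartan matrix from your item (i), so that ingredient is used earlier than you indicate. Second, for the reduction to $D(\Lh)\subseteq\Lh$ a citation of \cite{SEL} is delicate, since much of that treatment of classical algebras assumes $p>3$; a characteristic-free justification is available and cleaner: $\ad\Lh$ acts semisimply on $\Der(\Lg)$, inner derivations act trivially on $H^1(\Lg,\Lg)$, hence every class in $\Out(\Lg)$ has a representative of $\ad\Lh$-weight zero, and such a representative automatically satisfies $D(\Lh)\subseteq\Lh$ and $D(\Lg_\al)\subseteq\Lg_\al$ because $\Lh$ is self-centralizing (again by invertibility of the Cartan matrix). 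With that repair your proof is self-contained; what it buys over the paper's citation is that it makes visible exactly where characteristic $3$ enters and why $\Lg_2$ (structure constants divisible by $3$) and $\mathfrak{psl}_{3m}(\F)$ (Cartan determinant divisible by $3$) are genuine exceptions.
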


\begin{prop}
Let $\F$ be a field of characteristic $3$. Then we have
$\Der (\mathfrak{psl}_{3m}(\F))\cong \mathfrak{pgl}_{3m}(\F)$ for all $m\ge 2$. Hence $\Out(\mathfrak{psl}_{3m}(\F))\cong \F$
is abelian for all $m\ge 2$. Also $\Out(\Le_6/\Lz)$ is abelian.
\end{prop}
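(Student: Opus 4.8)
The plan is to reduce the statement to the single isomorphism $\Der(\Lg)\cong\mathfrak{pgl}_n(\F)$, where $\Lg=\mathfrak{psl}_n(\F)=\mathfrak{sl}_n(\F)/\F I_n$ and $n=3m$ (here $\F I_n\subseteq\mathfrak{sl}_n(\F)$, because $\tr I_n=n\equiv 0$). Once this is known, the description of $\Out(\Lg)$ is automatic: $\Lg$ is simple for every $n=3m$, hence centreless, so $\Inn(\Lg)\cong\Lg$ and $\Out(\Lg)=\Der(\Lg)/\Lg\cong\mathfrak{pgl}_n(\F)/\mathfrak{psl}_n(\F)\cong\F$, which is abelian. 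First I would record the inclusion $\mathfrak{pgl}_n(\F)\hookrightarrow\Der(\Lg)$: since $\Lg$ is an ideal of codimension one in $\mathfrak{pgl}_n(\F)=\mathfrak{gl}_n(\F)/\F I_n$, the adjoint action yields a homomorphism $\mathfrak{pgl}_n(\F)\to\Der(\Lg)$ whose kernel consists of the classes $\bar X$ with $[X,\mathfrak{sl}_n(\F)]\subseteq\F I_n$; as these commutators are traceless, the Jacobi identity together with perfectness of $\mathfrak{sl}_n(\F)$ and Schur's lemma force $X\in\F I_n$. So the whole problem comes down to showing that every $D\in\Der(\Lg)$ already lies in the image of $\mathfrak{pgl}_n(\F)$.

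For that I would run the classical ``derivations via root spaces'' argument. Let $\bar{\Lt}\subseteq\Lg$ be the image of the diagonal torus of $\mathfrak{sl}_n(\F)$, so $\Lg=\bar{\Lt}\oplus\bigoplus_{i\neq j}\F\,\bar E_{ij}$ with $\bar{\Lt}$ acting on $\bar E_{ij}$ through the weight $\chi_{ij}\colon\overline{\diag(a_1,\dots,a_n)}\mapsto a_i-a_j$. I would then fix a regular element $t_0\in\bar{\Lt}$, i.e. one for which the scalars $\chi_{ij}(t_0)$ are all nonzero and pairwise distinct; such a $t_0$ exists because $\F$ is infinite and the $\chi_{ij}$ are pairwise distinct and nonzero \emph{as functionals on $\bar{\Lt}$}, so only finitely many hyperplanes must be avoided, and for this $t_0$ one has $C_\Lg(t_0)=\bar{\Lt}$. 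Since $\ad v$ lies in the image of $\mathfrak{pgl}_n(\F)$ for every $v\in\Lg$, I may replace $D$ by $D-\ad v$; choosing $v\in\bigoplus_{i\neq j}\F\,\bar E_{ij}$ appropriately, which is possible because each $\chi_{ij}(t_0)\neq 0$, I may assume $D(t_0)\in\bar{\Lt}$. Then for $t\in\bar{\Lt}=C_\Lg(t_0)$ one gets $[t_0,Dt]=D[t_0,t]-[Dt_0,t]=0$, hence $D(\bar{\Lt})\subseteq\bar{\Lt}$.

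Given $D(\bar{\Lt})\subseteq\bar{\Lt}$, a routine weight computation --- comparing weight components in the Leibniz identity $D[t,\bar E_{ij}]=[Dt,\bar E_{ij}]+[t,D\bar E_{ij}]$ and using the distinctness of the $\chi_{ij}$ on $\bar{\Lt}$ --- shows that $D$ preserves each weight space, say $D\bar E_{ij}=c_{ij}\bar E_{ij}$, and moreover that $\chi_{ij}(Dt)=0$ for all $t$; since the $\chi_{ij}$ span $\bar{\Lt}^{*}$ this forces $D|_{\bar{\Lt}}=0$. Applying $D$ to $\bar E_{ik}=[\bar E_{ij},\bar E_{jk}]$ (distinct $i,j,k$) gives $c_{ik}=c_{ij}+c_{jk}$, hence $c_{ij}=d_i-d_j$ for suitable $d_1,\dots,d_n\in\F$, and then $D-\ad\big(\overline{\diag(d_1,\dots,d_n)}\big)$ annihilates every $\bar E_{ij}$ and all of $\bar{\Lt}$, so it vanishes. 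Therefore $D$ lies in the image of $\mathfrak{pgl}_n(\F)$, which completes the proof that $\Der(\Lg)\cong\mathfrak{pgl}_n(\F)$.

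The step I expect to be the genuine obstacle --- and the decisive place where the hypothesis $m\ge 2$ (rather than merely $m\ge 1$) is used --- is the assertion that the weights $\chi_{ij}$ remain pairwise distinct and nonzero on $\bar{\Lt}$ in characteristic $3$; equivalently, that the quotient torus of $\mathfrak{psl}_{3m}(\F)$ still carries the full root system of $A_{3m-1}$. For $n=3m\ge 6$ this follows from a short check on the coordinate vectors $\ep_i-\ep_j$ modulo the line spanned by $\ep_1+\dots+\ep_n$, but it breaks down for $n=3$, where $\chi_{12}=\chi_{23}$ on $\bar{\Lt}$ because $\ep_1-2\ep_2+\ep_3\equiv\ep_1+\ep_2+\ep_3$ modulo $3$ --- which is precisely why $\mathfrak{psl}_3(\F)\cong H(2;(1,1))^{(2)}$ is the exceptional counterexample and is excluded here. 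I would also note that the same computation, carried out directly in $\mathfrak{sl}_{3m}(\F)$, gives $\Der(\mathfrak{sl}_{3m}(\F))\cong\mathfrak{pgl}_{3m}(\F)$ for $m\ge 2$, and that the $\mathfrak{psl}$-statement can alternatively be deduced from this by relating the derivations of $\mathfrak{sl}_{3m}(\F)$ and of $\mathfrak{psl}_{3m}(\F)$ through the characteristic central ideal $\F I_n$.
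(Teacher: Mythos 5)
Your proof is correct, but it takes a genuinely different route from the paper: the paper does not prove this proposition at all, it simply records it as a known result from \cite{BU67}, which in turn rests on the classical determination of the derivation algebras of the simple Lie algebras of classical type (see \cite{STR1}). You instead give a self-contained argument, and it holds up. The injectivity of $\mathfrak{pgl}_n(\F)\to\Der(\mathfrak{psl}_n(\F))$ via the Jacobi identity, perfectness of $\mathfrak{sl}_n(\F)$ and Schur's lemma is sound; only your parenthetical appeal to tracelessness is vacuous here, since $\tr I_n=n=0$ in characteristic $3$, so the scalar target $\F I_n$ is itself traceless --- but the Jacobi/perfectness/Schur chain alone already forces $[X,\mathfrak{sl}_n(\F)]=0$ and hence $X\in\F I_n$. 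The surjectivity argument is the standard toral one: adjusting $D$ by an inner derivation using a regular $t_0\in\bar\Lt$, deducing $D(\bar\Lt)\subseteq\bar\Lt$, and then the weight computation, which needs exactly that the $\chi_{ij}$ are pairwise distinct and nonzero on $\bar\Lt$ and span $\bar\Lt^{*}$; your check that this holds for $n=3m\ge 6$ (the span has dimension $n-2=\dim\bar\Lt$ because $\sum_i\ep_i$ lies in the span of the roots when $3\mid n$) is right, and you correctly isolate this as the only place where $m\ge 2$ enters. Your observation that it breaks precisely for $n=3$, where $\chi_{12}=\chi_{23}$ on $\bar\Lt$, is consistent with Proposition \ref{3.1}, where $\Der(\mathfrak{psl}_3(\F))\cong\Lg_2$ is strictly larger than $\mathfrak{pgl}_3(\F)$. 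As for what each approach buys: the paper's citation keeps the exposition short and embeds the statement in the general theory of derivations of classical modular Lie algebras, while your computation makes the proposition independent of that literature and makes transparent why $\mathfrak{psl}_3(\F)$ is the unique exception in the family. Your closing remark that the same method gives $\Der(\mathfrak{sl}_{3m}(\F))\cong\mathfrak{pgl}_{3m}(\F)$ also works, though there the weights only cut out $\F I_n$ inside the torus (they do not span $\Lt^{*}$), so one needs the extra observation that a derivation with values in the centre kills the perfect algebra $\mathfrak{sl}_{3m}(\F)$.
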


\subsection{Cartan  type}
The list of simple modular Lie algebras of {\em Cartan type} for $p>3$ is given by
the {\em graded} simple Lie algebras of Cartan type
\[
W(m;\underline{n}),\; S(m;\underline{n})^{(1)},\; H(2r;\underline{n})^{(2)},\; K(2r+1;\underline{n})^{(1)},
\]
and their filtered deformations. Here $m\in \N$, $\underline{n}:=(n_1,\ldots ,n_m)\in \N^m$ and
$\abs{\un}:=n_1+\cdots +n_m$. \\[0.2cm]
These algebras are called {\em Witt algebras, special algebras, Hamiltonian algebras and contact algebras}.
They are the finite-dimensional versions defined over a field $\F$ of characteristic $p>0$ of the infinite-dimensional
Lie algebras of characteristic zero occurring in E. Cartan's work of $1909$ on pseudogroups in differential geometry.
For the precise definition of these algebras see H. Strade's book \cite{STR1}. All these algebras are still simple
for characteristic $p=3$, where we need $m\ge 3$ for the special algebras.
The dimensions of these algebras are given by
\begin{align*}
\dim W(m;\underline{n}) & = m\cdot p^{\abs{\un}}, \\
\dim S(m;\underline{n})^{(1)} & = (m-1)(p^{\abs{\un}}-1)\\
\dim H(2r;\underline{n})^{(2)} & = p^{\abs{\un}}-2, \\
  \dim K(2r+1;\underline{n})^{(1)} & =\begin{cases} p^{\abs{\un}},  \hspace*{0.7cm} \text{ if } 2r+1\not\equiv -3 \bmod p,\\
p^{\abs{\un}}-1, \text{ if } 2r+1\equiv -3 \bmod p.\end{cases}
\end{align*}
The derivation algebras have been computed for a field $\F$ of characteristic $p\ge 3$, see
Theorem $7.1.2$ in \cite{STR1}. In particular, the result for $p>3$ still holds for $p=3$, except for the Hamiltonian algebras.
So it follows from the work of Celousov \cite{CEL}, that the Zassenhaus conjecture is true for Witt algebras, special
algebras and contact algebras for an algebraically closed field of characteristic $p\ge 3$. 
However, there are new counterexamples in the Hamiltonian case for $p=3$. 
The following table gives a survey. 
\vspace*{0.5cm}
\begin{center}
\begin{tabular}{c|cccc}
$\Lg$ & conditions & $\dim \Der(\Lg)$ & $\dim \Out(\Lg)$ & conjecture \\[2pt]
\hline
$W(m;\underline{n})$ & $p\ge 2$ &  $m ( p^{\abs{\un}}-1) +\abs{\un}$  & $\abs{\un}-m$  & $\checkmark$  \\[4pt]
$S(m;\underline{n})^{(1)}$ & $p>0,m\ge 3$ & $(m-1)(p^{\abs{\un}}-1)+\abs{\un}+1$   & $\abs{\un}+1$  & $\checkmark$  \\[4pt]
$H(2;(1,1))^{(2)}$ & $p=3$  & $14$ & $7$ & $-$  \\[4pt]
$H(2;(1,n_2))^{(2)}$ & $p=3,n_2>1$  & $3^{n_2+1}+n_2+2$ & $n_2+4$ & $-$  \\[4pt]
$H(2r;\underline{n})^{(2)}$ & $p>3$, or $p=3,r>1$,  & $p^{\abs{\un}}+\abs{\un}$ & $\abs{\un}+2$ & $\checkmark$  \\  
                      & or $p=3,r=1,1<n_1\le n_2$ & & &  \\[4pt]
$K(2r+1;\underline{n})^{(1)}$ & $p>2, p\nmid 2r+4$ & $p^{\abs{\un}}+\abs{\un}-2r-1$ & $\abs{\un}-(2r+1)$ & $\checkmark$  \\[4pt]  
$K(2r+1;\underline{n})^{(1)}$ & $p>2, p\mid 2r+4$ &  $p^{\abs{\un}}+\abs{\un}-2r-1$ & $\abs{\un}-2r$  & $\checkmark$  \\  
\end{tabular}
\end{center}
\vspace*{0.5cm}
Note that we also have 
\[
H(2;(1,n_2))^{(2)}\cong H(2;(n_1,1))^{(2)}
\]
for $p\ge 3$, see \cite{STR1}, $(3)$ on page $199$. \\[0.2cm]
We have first guessed these results for $p=3$ in low dimensions by doing a computation with GAP. 
In fact, we computed the dimensions of the derived series of the outer derivation algebras for the 
Hamiltonian algebras $H(2r;\underline{n})^{(2)}$ in a few cases. The following table shows the results.
The last computation was only possible on the CoCalc server of Anton Mellit, with
$192$ GB RAM. 
\vspace*{0.5cm}
\begin{center}
\begin{tabular}{c|cccc}
$\Lg$ & $\dim(\Lg)$ & $\dim \Der(\Lg)$ & $\dim \Out (\Lg)^{(i)}$ & $\Out(\Lg)$ \\[4pt]
\hline
$H(2;(1,1))^{(2)}$     & $7$   & $14$  & $(7,7,\ldots )$ & simple \\[4pt]
$H(2;(1,2))^{(2)}$     & $25$  & $31$  & $(6,5,5,\ldots )$ &  non-solvable \\[4pt]
$H(2;(1,3))^{(2)}$     & $79$  & $86$  & $(7,5,5,\ldots )$ & non-solvable \\[4pt]
$H(2;(2,2))^{(2)}$     & $79$  & $85$  & $(6,3,1,0)$  & solvable \\[4pt]
$H(4;(1,1,1,1))^{(2)}$ & $79$  & $85$  & $(6,4,0)$ &  solvable  \\[4pt]
$H(2;(2,3))^{(2)}$     & $241$ & $248$ & $(7,3,1,0)$ &  solvable                                           
\end{tabular}
\end{center}
\vspace*{0.5cm}
In order to prove our results, let us introduce further notations. 
Let $\F$ be a field of characteristic $p>2$. Denote by $\CO(m)$ the associative and commutative algebra
with unit element over $\F$ defined by generators $x_i^{(r)}$ for $r\ge 0$ and $1\le i\le m$, and relations
\[
x_i^{(0)}=1,\quad x_i^{(r)}x_i^{(s)}=\binom{r+s}{r}x_i^{(r+s)}
\]  
for $r,s\ge 0$. Put $x_i:=x_i^{(1)}$ and $x^{(a)}:=x_1^{(a_1)}\cdots x_m^{(a_m)}$ for a tuple $a=(a_1,\ldots ,a_m)\in \N^m$.
Then the {\em divided power algebra} of dimension $p^{\abs{\un}}$ is defined by
\[
\CO(m;\underline{n}):={\rm span} \{x^{(a)}\mid 0\le a_i<p^{n_i}\}.
\]  
The product is given by
\[
x^{(a)}x^{(b)}:=\binom{a+b}{b}x^{(a+b)},
\]
where $\binom{a}{b}=\prod_{i=1}^m\binom{a_i}{b_i}$ and $x^{(c)}=0$ if $c_i\ge p^{n_i}$ for some $c_i$.
For each $i$ denote by $\partial_i$ the derivation of the algebra $\CO(m)$ given by
\[
\partial_i(x_j^{(r)})=\de_{i,j}x_j^{(r-1)}. 
\]
The {\em generalized Jacobson-Witt algebra} is defined by
\[
W(m,\underline{n}):=\sum_{i=1}^m \CO(m;\underline{n}) \partial_i,
\]
together with the Lie bracket
\[
[x^{(a)}\partial_i,x^{(b)}\partial_j]=\binom{a+b-\ep_i}{a}x^{(a+b-\ep_i)}\partial_j-\binom{a+b-\ep_j}{b}x^{(a+b-\ep_j)}\partial_i
\]
where $\ep_i=(\de_{i,1},\ldots ,\de_{i,m})\in \N^m$. \\[0.2cm]
Consider the linear operator $D_H\colon \CO(2r;\underline{n})\ra W(2r;\underline{n})$ defined by
\[
D_H(x^{(a)})=\sum_{i=1}^{2r}\sigma(i)\partial_i(x^{(a)})\partial_{i'},
\]  
where
\[
\sigma (i):=\begin{cases} 1,  \hspace*{0.32cm} \text{ if } 1\le i\le r,\\
-1, \text{ if } r+1\le i\le 2r,\end{cases}
\]
and
\[
i':=\begin{cases} i+r,  \text{ if } 1\le i\le r,\\
i-r, \text{ if } r+1\le i\le 2r.\end{cases}
\]
The {\em Hamiltonian algebra} is defined by
\[
H(2r;\underline{n})^{(2)}={\rm span} \{D_H(x^{(a)})\mid 0<a<\tau(\underline{n})\},
\]
where $\tau(\underline{n})=(p^{n_1}-1,\ldots ,p^{n_m}-1)\in \N^m$. The Lie bracket is given by
\[
[D_H(x^{(a)}),D_H(x^{(b)})]=D_H(D_H(x^{(a)})(x^{(b)})).
\]  

The main result of this paper is that we obtain an infinite family of counterexamples to the Zassenhaus conjecture, which contains
the known counterexample $\mathfrak{psl}_3(\F)$ as the smallest case $n=1$:

\begin{thm}\label{3.4}
For all $n\ge 1$ the simple modular Lie algebra $H(2;(1,n))^{(2)}$ of dimension $3^{n+1}-2$ in characteristic $3$
does not have a solvable outer derivation algebra.
\end{thm}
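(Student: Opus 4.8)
\subsection*{Proof proposal}

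The plan is to work in the explicit model above with $\Lg:=H(2;(1,n))^{(2)}$, $p=3$, $\tau=(2,3^{n}-1)$, compute enough of $\Der(\Lg)$ and locate inside $\Out(\Lg)=\Der(\Lg)/\Inn(\Lg)$ a copy of $\mathfrak{sl}_{2}(\F)$; since $p=3\neq 2$ this algebra is simple, hence non-solvable, and the theorem follows. For $n=1$ there is nothing new to do: $H(2;(1,1))^{(2)}\cong\mathfrak{psl}_{3}(\F)$ and Proposition~\ref{3.1} already gives $\Out(\Lg)\cong\mathfrak{psl}_{3}(\F)$, which is simple. The same construction actually works for every $n\ge 1$, but the arithmetic degenerates at $n=1$, so I would split that case off and treat $n\ge 2$ uniformly below.

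First I would introduce the weight grading $\Lg=\Lg_{-1}\oplus\Lg_{0}\oplus\Lg_{1}$ with $\Lg_{d}=\s\{D_H(x^{(a)})\mid a_{1}=d+1\}$ (recall $0\le a_{1}\le 2$), and check that it is a Lie algebra grading by noting that the Poisson bracket $\{x^{(a)},x^{(b)}\}$ has $x_{1}$-degree $a_{1}+b_{1}-1$, so $D_H(\{x^{(a)},x^{(b)}\})$ has weight $(a_{1}-1)+(b_{1}-1)$. The degree derivation $\mathcal{H}$ of this grading, with $\mathcal{H}\,D_H(x^{(a)})=(a_{1}-1)D_H(x^{(a)})$, is a derivation of $\Lg$, and $\mathcal{H}=2\bigl(\mathcal{D}-\ad D_H(x_{1}x_{2})\bigr)$, where $\mathcal{D}$ is the standard degree derivation with $\mathcal{D}\,D_H(x^{(a)})=(\abs{a}-2)D_H(x^{(a)})$ (here one uses $\{x_{1}x_{2},x^{(a)}\}=(a_{2}-a_{1})x^{(a)}$). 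Thus $[\mathcal{H}]$ and $[\mathcal{D}]$ span the same line in $\Out(\Lg)$, and $[\mathcal{D}]\neq 0$: indeed $\mathcal{D}$ acts as the scalar $-1$ on the total-degree $-1$ part $\s\{\partial_{1},\partial_{2}\}$ of $\Lg$, whereas every homogeneous inner derivation of total degree $0$ is $\ad$ of an element of $\Lg_{0}^{\mathrm{tot}}=\s\{D_H(x_{1}^{(2)}),D_H(x_{1}x_{2}),D_H(x_{2}^{(2)})\}\cong\mathfrak{sp}_{2}(\F)=\mathfrak{sl}_{2}(\F)$, which acts faithfully and with trace zero on the two-dimensional module $\s\{\partial_{1},\partial_{2}\}$ and hence never as a nonzero scalar. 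Therefore $\mathcal{H}$ is an outer derivation.

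The heart of the proof is then to produce two further derivations $\mathcal{E}\in\Der(\Lg)$ of $x_{1}$-weight $+1$ and $\mathcal{F}\in\Der(\Lg)$ of $x_{1}$-weight $-1$ whose classes in $\Out(\Lg)$ are nonzero and satisfy $[\mathcal{E},\mathcal{F}]\equiv\mathcal{H}\pmod{\Inn(\Lg)}$; the relations $[\mathcal{H},\mathcal{E}]=\mathcal{E}$ and $[\mathcal{H},\mathcal{F}]=-\mathcal{F}$ are then automatic from the grading. I would write $\mathcal{E}$ and $\mathcal{F}$ down explicitly on the basis $D_H(x^{(a)})$ — shifting $a_{1}$ by $\pm 1$ together with a correction in the $x_{2}$-exponent — and verify the derivation identity and the bracket relations directly; this is exactly the point where the characteristic-three divided-power identities (such as $\binom{a_{1}+1}{2}-\binom{a_{1}}{2}=a_{1}$ and the vanishing $\binom{3}{2}\equiv 0$) make the construction close up, whereas for $p>3$ the analogous weight $\pm 1$ derivations are all inner — consistent with the fact that the Zassenhaus conjecture holds there. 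Granting this, the classes $\bar{\mathcal{H}},\bar{\mathcal{E}},\bar{\mathcal{F}}$ satisfy the $\mathfrak{sl}_{2}$-relations in $\Out(\Lg)$ after the rescaling $h=2\bar{\mathcal{H}}$, $e=\bar{\mathcal{E}}$, $f=2\bar{\mathcal{F}}$, giving a Lie algebra homomorphism $\mathfrak{sl}_{2}(\F)\to\Out(\Lg)$ whose image is nonzero because $\bar{\mathcal{H}}\neq 0$; since $\mathfrak{sl}_{2}(\F)$ is simple in characteristic $3$, this image is isomorphic to $\mathfrak{sl}_{2}(\F)$, so $\Out(\Lg)$ contains a non-solvable subalgebra and is itself non-solvable. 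With additional bookkeeping one then identifies the remaining outer derivations — two more spanning the natural $\mathfrak{sl}_{2}$-module $V(2)$, and the divided-power derivations $\partial_{2}^{(3)},\dots,\partial_{2}^{(3^{\,n-1})}$ spanning an abelian summand $\F^{\,n-1}$ — to obtain the refined isomorphism $\Out(\Lg)\cong(\mathfrak{sl}_{2}(\F)\ltimes V(2))\oplus\F^{\,n-1}$ announced in the introduction.

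The main obstacle is precisely the explicit construction in the third step: producing $\mathcal{E}$ and $\mathcal{F}$ in closed form and checking, with the divided-power calculus modulo $3$, that they are genuine derivations of $\Lg$ that close into $\mathfrak{sl}_{2}(\F)$ together with $\mathcal{H}$ — in particular that $[\mathcal{E},\mathcal{F}]$ is not inner. A secondary point requiring care is confirming that $\mathcal{E},\mathcal{F}$ are not inner, which one extracts from the known structure of $\Der(\Lg)$ in the generic Hamiltonian case together with the single additional derivation that appears for $\underline{n}=(1,n)$ in characteristic three; and, as noted, the degenerate arithmetic at $n=1$, which is why that case is handled separately via Proposition~\ref{3.1}.
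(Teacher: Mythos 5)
Your overall strategy coincides with the paper's: exhibit a copy of $\mathfrak{sl}_2(\F)$ inside $\Out(\Lg)$ built from derivations that are homogeneous for the $x_1$-weight grading, and conclude non-solvability (the paper's $H$ is exactly your $\mathcal{H}$, acting by $1-a_1=-(a_1-1)$). However, as written your argument has a genuine gap, and you name it yourself: the derivations $\mathcal{E}$ and $\mathcal{F}$ of weight $\pm 1$ are never constructed, the verification that they are derivations is never carried out, and the crucial identity $[\mathcal{E},\mathcal{F}]\equiv\mathcal{H}\pmod{\Inn(\Lg)}$ is only ``granted''. This is not a routine detail that can be deferred: it is the entire content of the theorem. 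Your own remark that for $p>3$ all weight $\pm1$ derivations are inner shows that the existence of a suitable pair $(\mathcal{E},\mathcal{F})$ is precisely the characteristic-three phenomenon to be proved, and nothing in your text certifies it. Moreover, your reduction via the homomorphism $\mathfrak{sl}_2(\F)\to\Out(\Lg)$ (nonzero because $\bar{\mathcal{H}}\neq 0$, hence injective by simplicity of $\mathfrak{sl}_2$ in characteristic $3$) is a nice shortcut, but it only becomes available after the relations are established \emph{in} $\Out(\Lg)$, i.e.\ after $[\mathcal{E},\mathcal{F}]-\mathcal{H}$ has been shown to be inner; without explicit formulas there is no way to check this, since a priori $[\mathcal{E},\mathcal{F}]$ could differ from $\mathcal{H}$ by a non-inner term or collapse entirely.

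The paper fills exactly this hole by explicit formulas: $E(D_H(x_1^ax_2^b))=\delta_{a,2}\,D_H(x_2^{b+1})$, $F(D_H(x_1^ax_2^b))=\delta_{a,0}\,D_H(x_1^2x_2^{b-1})$, and $H(D_H(x_1^ax_2^b))=(1-a)\,D_H(x_1^ax_2^b)$. The derivation property of $E$ is verified by a direct computation with the structure constants $f_{a,b,c,d}$, using Pascal's identity and $2=-1$ in $\F$; $F$ is seen to be a derivation because it is the restriction to $\Lg$ of the inner derivation $\ad\!\big(D_H(x_1^{(3)})\big)$ of the larger algebra $H(2;(1,n))$, and $H=[E,F]$. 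One then checks $[E,H]=-2E$, $[F,H]=2F$, $[E,F]=H$, so $\Ls=\s\{E,F,H\}\cong\mathfrak{sl}_2(\F)$ inside $\Der(\Lg)$, and finally compares the matrix of a general element $\la E+\mu F+\nu H$ with the matrix of a general inner derivation on the first few basis vectors to get $\Ls\cap\ad(\Lg)=0$; this last step replaces (and is stronger than) your outerness argument for $\mathcal{H}$ alone, which by itself would not suffice. Your preliminary observations (the grading, $\mathcal{H}=2(\mathcal{D}-\ad D_H(x_1x_2))$, the trace argument showing $\mathcal{D}$ is outer, and the separate treatment of $n=1$ via Proposition~\ref{3.1}) are correct, but until the analogues of $E$ and $F$ are written down and the divided-power computations are actually performed, the proof of non-solvability is not complete.
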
  

\begin{proof}
We will use the basis of $\Lg=H(2;(1,n))^{(2)}$ given above, for the special case  of $p=3$, $m=2$,
and $\underline{n}=(n_1,n_2)=(1,n)$. For $x^{(\alpha)}$ we will write $x_1^{a}x_2^{b}$. Then the explicit
Lie brackets are given by
\[
[D_H(x_1^ax_2^b), D_H(x_1^cx_2^d)]=f_{a,b,c,d}\cdot D_H(x_1^{a+c-1}x_2^{b+d-1}),
\]  
where
\[
f_{a,b,c,d}:= e_ae_d\cdot \binom{a+c-1}{a-1} \binom{b+d-1}{d-1}
-e_be_c\cdot \binom{a+c-1}{c-1} \binom{b+d-1}{b-1},
\]
with $e_k:= 1-\delta_{k,0}$. \\[0.2cm]
Let us order the basis elements $D_H(x_1^ax_2^b)$ of $\Lg$ with respect to the formal 
exponents as follows:
\[
D_H(x_1)\prec D_H(x_1^2)\prec D_H(x_2)\prec D_H(x_1x_2)\prec D_H(x_1^2x_2) \prec \cdots ,
\]  
so that we can write a general inner derivation $D \in \Der(\Lg)$ as
\[
\al \cdot \ad (D_H(x_1))+\be \cdot\ad (D_H(x_1^2)) +\ga \cdot \ad (D_H(x_2)) + \de \cdot \ad (D_H(x_1x_2))
+\ep \cdot \ad (D_H(x_1^2x_2))+\cdots 
\]  
Using the Lie brackets, the matrix of $D$ with respect to this ordered basis is of the form
\[
D=  \left(\begin{array}{@{}ccc|ccc|ccc@{}}
    -\de   & -\ga   &   &    &      &  & & & \\
    -\ep   & \de    & 0 &    &      &  & & & \\ 
           & 0      & \de    & -\ga &      & & & & \\ \hline
           &        & \ep    & 0    & -\ga & & & &\\
           &        &        & \ep  & -\de &  0 & & &\\ 
           &        &        &      & 0    & -\de & & & \\ \hline
\vdots     & \vdots & \vdots &      &      &      & \vdots & \vdots & \vdots \\  \hline
0          &        &        &      &      &      &  & &  \\ 
0          &        &        &      &      &      &  & & \\
0          & 0      & 0      &      &      &      &  & & \\
\end{array}\right).
\]
For $n=1$ we have $\Lg=H^2(2;(1,1))^{(2)}\cong \mathfrak{psl}_3(\F)$, where we already know that $\Out(\Lg)\cong \Lg$
is not solvable, see Proposition $\ref{3.1}$. So we may assume that $n>1$.
Consider the linear maps $E,F,H\in \End(\Lg)$ defined by 
\begin{align*}
	E&\colon \Lg \to \Lg, \quad D_H(x_1^ax_2^b)\mapsto \delta_{a,2}\cdot D_H(x_2^{b+1}),\\
	F&\colon \Lg \to \Lg, \quad D_H(x_1^ax_2^b)\mapsto \delta_{a,0}\cdot  D_H(x_1^2x_2^{b-1}),\\
	H&\colon \Lg \to \Lg, \quad D_H(x_1^ax_2^b)\mapsto (1-a)\cdot D_H(x_1^ax_2^b).
\end{align*}
We claim that $E,F,H\in \Der(\Lg)$ are derivations of $\Lg$. This follows easily from a direct computation.
Indeed, we have
\begin{align*}
E([D_H(x_1^ax_2^b),D_H(x_1^cx_2^d)]) & =f_{a,b,c,d}\cdot E(D_H(x_1^{a+c-1}x_2^{b+d-1}))\\
    & =f_{a,b,c,d}\cdot \delta_{a+c-1,2}\cdot D_H(x_2^{b+d})\\
    & = {{b+d}\choose{b}}\cdot \delta_{(a,c),(1,2)}\cdot D_H(x_2^{b+d})-{{b+d}\choose{b}}\cdot \delta_{(a,c),(2,1)}\cdot D_H(x_2^{b+d})\\
    & = -\delta_{a,2} e_c\cdot {{b+d}\choose{b}} D_H(x_1^{c-1}x_2^{b+d})+ \delta_{c,2}e_a\cdot {{b+d}\choose{b}} D_H(x_1^{a-1}x_2^{b+d})\\
    & = \delta_{a,2}\cdot f_{0,b+1,c,d}\cdot D_H(x_1^{c-1}x_2^{b+d}) +  \delta_{c,2}\cdot f_{a,b,0,d+1}\cdot D_H(x_1^{a-1}x_2^{b+d}) \\[0.1cm]
    & = [\delta_{a,2}\cdot D_H(x_2^{b+1}),D_H(x_1^{c}x_2^{d})]+[D_H(x_1^{a}x_2^{b}),\delta_{c,2}\cdot D_H(x_2^{d+1})]\\[0.1cm]
    & = [E(D_H(x_1^{a}x_2^{b})),D_H(x_1^{c}x_2^{d})] + [D_H(x_1^{a}x_2^{b}),E(D_H(x_1^{c}x_2^{d}))].\\ 
\end{align*}
Here we have used that $2=-1$ in $\F$ and Pascal's identity
\[
\binom{b+d-1}{d-1}+\binom{b+d-1}{d}=\binom{b+d}{d}.	
\]
A similar computation shows that also $F$ and $H$ are derivations. On the other hand, this follows anyway, because $F$ coincides
with the restriction of the inner derivation $\ad (D_H(x_1^3))$ of the larger Lie algebra $H(2;(1,n))$, and $H$ coincides with the
commutator $[E,F]$, and hence is a derivation. It is easy to see that we have
\[
[E,H]=E=-2E,\; [F,H]=-F=2F, [E,F]=H.
\]  
Thus $(E,F,H)$ forms an $\mathfrak{sl}_2(\F)$-triple in $\Der(\Lg)$, i.e., the subalgebra $\Ls$ of $\Der(\Lg)$ generated by
$E,F,H$ is isomorphic to $\mathfrak{sl}_2(\F)$. Now the matrix of $\la E+\mu F+\nu H$ with respect to the ordered basis of $\Lg$
has the form
\[
D=  \left(\begin{array}{@{}ccc|ccc|ccc@{}}
    0      &        &        &    &         &  & & & \\
           & -\nu   & \mu    &    &         &  & & & \\ 
           & \la    & \nu    &    &         & & & & \\ \hline
           &        &        & 0    &       & & & &\\
           &        &        &      & -\nu  &  \mu & & &\\ 
           &        &        &      & \la   &  \nu & & & \\ \hline
\vdots     & \vdots & \vdots & \vdots & \vdots & \vdots  & \vdots & \vdots & \vdots \\  
           &        &        &      &      &      &  & &  \\ 
\end{array}\right).
\]
Comparing this with the form for the general inner derivation $D$ we conclude that the subalgebra $\Ls$ satisfies
$\Ls\cap \ad (\Lg)=0$. Hence $\Out(\Lg)$ contains the subalgebra
\[
(\Ls+\ad(\Lg))/\ad(\Lg)\cong \Ls/\Ls\cap \ad(\Lg)\cong \Ls\cong \mathfrak{sl}_2(\F).
\]  
Thus  $\Out(\Lg)$ is not solvable.
\end{proof}  

So we have obtained an infinite family of counterexamples. In addition, we can be more precise about the structure of the
outer derivation algebra of $H(2;(1,n))^{(2)}$. Denote by $V(2)$
the natural representation of $\mathfrak{sl}_2(\F)$. Then the Lie algebra $\mathfrak{sl}_2(\F)\ltimes V(2)$ in characteristic
$3$ has a basis $(e_1,\ldots ,e_5)$ with Lie brackets
\begin{align*}
[e_1,e_2] & = e_3,    & [e_2,e_3]  & = 2e_2,  & [e_3,e_4] & = e_4,\\
[e_1,e_3] & = e_1,    & [e_2,e_4]  & = e_5,   & [e_3,e_5] & = 2e_5. \\
[e_1,e_5] & = e_4,    &            
\end{align*}

\begin{thm}\label{3.5}
Let $n>1$. Then the outer derivation algebra of $H(2;(1,n))^{(2)}$ in characteristic $3$ is isomorphic to
$(\mathfrak{sl}_2(\F)\ltimes V(2))\oplus \F^{n-1}$. 
\end{thm}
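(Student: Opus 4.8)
The plan is to compute $\Der(\Lg)$ completely for $\Lg=H(2;(1,n))^{(2)}$ in characteristic $3$ with $n>1$, and then pass to the quotient by $\ad(\Lg)$. First I would fix the ordered basis $D_H(x_1^ax_2^b)$ introduced above and write an arbitrary derivation $D$ as a matrix with respect to it. The key structural fact I would exploit is that $\Lg$ carries a natural $\Z$-grading $\Lg=\bigoplus_k \Lg_k$ by the exponent $a$ of $x_1$ (equivalently by the eigenvalue of the semisimple derivation $H$ up to a shift), with graded pieces $\Lg_{-1},\Lg_0,\Lg_1$ in the $x_1$-degree, since $0\le a\le p^{n_1}-1=2$. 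Because $\Der(\Lg)$ inherits a grading $\Der(\Lg)=\bigoplus_j \Der(\Lg)_j$ from the grading of $\Lg$, and because $\Lg$ is generated in degrees $\le 1$, a derivation of positive degree $j\ge 1$ is determined by its action on $\Lg_{-1}\oplus\Lg_0$; this drastically limits the possibilities. I would compute each homogeneous component $\Der(\Lg)_j$ by hand, using the explicit bracket formula with the structure constants $f_{a,b,c,d}$ and Pascal's identity, exactly as in the verification that $E,F,H$ are derivations.

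The second step is the bookkeeping: identify which homogeneous derivations are inner and which are outer. From the matrix shapes displayed in the proof of Theorem~\ref{3.4} one reads off that $\ad(D_H(x_1))$, $\ad(D_H(x_1^2))$ contribute the degree $-1$ part, $\ad(D_H(x_1x_2^b))$ the degree $0$ part, and $\ad(D_H(x_2^b))$ and $\ad(D_H(x_1^2x_2^b))$ the degree $+1$ and degree $-1$ parts respectively. The point is that $\Inn(\Lg)\cong\Lg$ already exhausts most of each graded piece, and the outer classes are exactly: (i) the $\Ls\cong\mathfrak{sl}_2(\F)$-triple $(E,F,H)$ already constructed, living in degrees $-1,+1,0$ — note $H$ is outer because the grading derivation of $\Lg$ is not inner (its trace pattern does not match any $\ad(D_H(x_1x_2^b))$); (ii) a $2$-dimensional outer piece $V(2)$ on which $\Ls$ acts naturally, coming from derivations of $x_1$-degree $+1$ that multiply by powers of $x_2$ and are not realized by any $\ad D_H(x_2^b)$ modulo the relation defining $H(2;(1,n))^{(2)}$ rather than $H(2;(1,n))$; and (iii) an $(n-1)$-dimensional abelian piece $\F^{n-1}$ of degree-$0$ derivations, which morally are the derivations $D_H(x_1^ax_2^b)\mapsto g(b)\,D_H(x_1^ax_2^b)$ for suitable functions $g$ of the $x_2$-degree — one such is inner (the grading by $x_2$-degree, coming from $\ad D_H(x_1x_2)$ up to correction), leaving an $(n-1)$-dimensional space of outer ones once one quotients by the inner relations and by scalars. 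I would determine exactly how $\Ls$ acts on all of these: it must act trivially on the $\F^{n-1}$ summand (degree $0$, commuting with $H$, killed by $E$ and $F$ for dimension reasons) and naturally on $V(2)$, giving the claimed semidirect-plus-direct decomposition $(\mathfrak{sl}_2(\F)\ltimes V(2))\oplus\F^{n-1}$. A consistency check: $\dim\Out(\Lg)=3+2+(n-1)=n+4$, matching the table entry for $H(2;(1,n_2))^{(2)}$ with $n_2=n>1$.

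The main obstacle will be the honest computation of $\Der(\Lg)$, in particular nailing down the degree-$0$ part and proving there is nothing extra in higher degrees. For the degree-$0$ derivations one must solve a recursion in the $x_2$-direction: writing a degree-$0$ derivation as a "diagonal plus nilpotent-in-$x_2$" operator and imposing the derivation property forces the diagonal entries to satisfy an additive/Pascal-type functional equation whose solution space over $\F_3$ is $n$-dimensional (giving $n-1$ outer classes after removing one inner relation). For the vanishing in degrees $|j|\ge 2$, the grading argument above suffices: any such derivation sends $\Lg_{-1}$ into $\Lg_{j-1}$ with $j-1\ge 1$, and a short induction using that $\Lg$ is generated by $\Lg_{-1}+\Lg_0$ together with the explicit brackets shows it must vanish — this is the place where one uses the finiteness $a\le 2$ and the characteristic-$3$ coincidence $2\equiv-1$. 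Once $\Der(\Lg)$ is pinned down, identifying the abstract isomorphism type of the quotient is routine: compute the brackets among representatives of the outer classes, compare with the basis $(e_1,\dots,e_5)$ and Lie brackets listed just before the statement, observe that $[\,\F^{n-1},\Der(\Lg)\,]\subseteq\ad(\Lg)$ so the $\F^{n-1}$ summand is central in $\Out(\Lg)$ and splits off as a direct factor, and conclude.
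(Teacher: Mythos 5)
Your plan is genuinely different from the paper's proof: the paper never computes $\Der(\Lg)$ from scratch, but imports Strade's description (\cite{STR1}, Theorem 7.1.2(3)(b)), namely $\Der(\Lg)\cong CH(2;(1,n))+\sum_{i=1}^{n-1}\F\cdot\partial_2^{3^i}+\F\cdot F$, and then only has to exhibit outer representatives $E,F,H,V,W,\partial_2^{3^i}$, compute their brackets, and check that their span meets $\ad(\Lg)$ trivially. Your replacement for that input --- a homogeneous-component analysis of $\Der(\Lg)$ with respect to the $x_1$-degree grading --- is where the real content would have to be, and its scaffolding is incorrect at the decisive points. With your grading ($D_H(x_1^ax_2^b)$ in degree $a-1$, so $\Lg=\Lg_{-1}\oplus\Lg_0\oplus\Lg_1$), the derivation $E$ maps $\Lg_1$ to $\Lg_{-1}$ and $F$ maps $\Lg_{-1}$ to $\Lg_1$, i.e.\ they are homogeneous of degrees $-2$ and $+2$; so the asserted vanishing of $\Der(\Lg)_j$ for $\abs{j}\ge 2$ is false and would eliminate precisely the $\mathfrak{sl}_2$-triple you need. (If you instead mean the $\Z/3$-grading by eigenvalues of $H$, then ``$\abs{j}\ge 2$'' has no content and the argument must be restated.) Moreover $\Lg_{-1}\oplus\Lg_0$ is a subalgebra, since degrees add, so it does not generate $\Lg$ and a derivation of positive degree is not determined by its restriction to it; and the degree bookkeeping for inner derivations is off ($\ad D_H(x_2^b)$ has degree $-1$, $\ad D_H(x_1^2x_2^b)$ degree $+1$, $\ad D_H(x_1)$ degree $0$).

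There is also a substantive mismatch in the outcome you predict for the degree-zero part: the $n-1$ additional outer classes are represented by $\partial_2^{3^i}$, $i=1,\dots,n-1$, which shift the $x_2$-exponent by $3^i$ and are nilpotent operators, not diagonal multipliers $D_H(x_1^ax_2^b)\mapsto g(b)\,D_H(x_1^ax_2^b)$; it is not clear that the functional-equation analysis you sketch would produce them, nor that it would rule out further derivations (e.g.\ the analogues of $V$ and $W$, which are restrictions of $\ad D_H(x_2^{3^n})$ and $\ad D_H(x_1^2x_2^{3^n-1})$ from $H(2;(1,n))$, have degrees $-1$ and $+1$, not both $+1$ as you state). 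So the proposal has a genuine gap: the complete determination of $\Der(\Lg)$ is deferred to ``the honest computation,'' and the structural argument offered to control it does not work as stated; you must either quote the known result as the paper does, or carry out a correct eigenspace computation with a grading for which the degree bounds and generation claims actually hold. The parts after that point --- the bracket table among outer representatives, the observation $[\F^{n-1},\Der(\Lg)]\subseteq\ad(\Lg)$ making the abelian summand central in $\Out(\Lg)$, and the dimension check $\dim\Out(\Lg)=n+4$ --- do agree with the paper and are fine.
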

  
\begin{proof}
Let $\Lg=H(2;(1,n))^{(2)}$. According to \cite{STR1}, Theorem $7.1.2$, $(3)$ part $(b)$ on page 
$358$ we have
\[
\Der(\Lg) \cong CH(2;(1,n))+ \sum_{i=1}^{n-1} \F \cdot \partial_2^{3^i}+ \F\cdot d,
\]
where $d$ is the derivation which we called $F$ in the proof of Theorem $\ref{3.4}$, and
\[
CH(2;(1,n))=H(2;(1,n))\oplus \F\cdot (x_1\partial_1+x_2\partial_2).
\]  
We have $\dim CH(2;(1,n))=3^{n+1}+2$, see \cite[page 273]{KS}, so that we obtain $\dim \Der(\Lg)=3^{n+1}+n+2$
and $\dim \Out(\Lg)=n+4$. Consider the linear maps given by
\begin{align*}
V\ &\colon\ L\to L, \quad D_{H}(x_1^ax_2^b) \mapsto \delta_{b,0} \cdot D_H(x_1^{a-1}x_2^{3^n-1}), \\ 
W \ &\colon \ L\to L, \quad D_{H}(x_1^ax_2^b) \mapsto \delta_{a+b,1}\cdot(-1)^a\cdot D_H(x_1^{a+1}x_2^{b+3^n-2}).
\end{align*}
They are derivations of $\Lg$, because each of them is a restriction of inner derivations of the 
larger Lie algebra
$H(2;(1,n))$ to $\Lg$, namely of $\ad(D_H(x_2^{3^n}))$, respectively of $\ad(D_H(x_1^2x_2^{3^n-1}))$. By a computation we see that
\[
[E,W]=V,\; [F,V]=W,\; [H,V]=V,\; [H,W]=2W,
\]
where $E,F,H$ are the derivations of $\Lg$ given in the proof of Theorem $\ref{3.4}$. Hence the subalgebra $\Lt$ of $\Der(\Lg)$
generated by $E,F,H,V,W$ is isomorphic to $\mathfrak{sl}_2(\F)\ltimes V(2)$. \\
The matrix of $\la E+\mu F+\nu H + \eta V+\xi W$ with respect to the ordered basis of $\Lg$ is of the form
\[
D=  \left(\begin{array}{@{}ccc|ccc|ccc@{}}
    0      &        &        &    &         &  & & & \\
           & -\nu   & \mu    &    &         &  & & & \\ 
           & \la    & \nu    &    &         & & & & \\ \hline
           &        &        & 0    &       & & & &\\
           &        &        &      & -\nu  &  \mu & & &\\ 
           &        &        &      & \la   &  \nu & & & \\ \hline
\vdots     & \vdots & \vdots & \vdots & \vdots & \vdots  & \vdots & \vdots & \vdots \\  \hline
-\xi       & 0      & 0      &      &      &      & \ddots  & &  \\ 
\eta       & 0      & 0      &      &      &      &         & \ddots & \\
0          & \eta   & \xi    &      &      &      &      &   & \ddots \\
\end{array}\right)
\]
Comparing with the matrix $D$ of inner derivations (see the proof of Theorem $3.4$) we obtain $\Lt \cap \ad(\Lg)=0$, so that
$\Out(\Lg)$ has a subalgebra isomorphic to $\mathfrak{sl}_n(\F)\ltimes V(2)$. We claim that the derivations
$\partial_2^{3^i}$ belong to the center of $\Out(\Lg)$. Indeed, they commute pairwise, and they commute with $E,F,H$.
Furthermore we have, using also \cite[Lemma 2.1.2(1), page 61]{STR1},
\begin{align*}
[\partial_2^{3^i},V]&=\ad(D(x_2^{3^n-3^i})),\\
[\partial_2^{3^i},W]&=\ad(D(x_1^2x_2^{3^n-3^i-1})),
\end{align*}
for $i=1,\dots,n-1$. This implies that $\Out(\Lg)\cong \Lt \oplus \F^{n-1}$, where 
$\Lt\cong \mathfrak{sl}_2(\F)\ltimes V(2)$. 
\end{proof}

We will show now that the remaining cases for the Hamiltonian Lie algebras
$H(2r;\underline{n})^{(2)}$ do not provide new counterexamples to the Zassenhaus conjecture
for $p=3$. We have two cases, namely first $r>1$, and secondly $r=1$ and $1<n_1\le n_2$, where
$\underline{n}=(n_1,n_2)\in \N^2$. Let $\Lh_3(\F)$ be the Heisenberg Lie algebra over $\F$ with basis $\{e_1,e_2,e_3\}$ and
Lie bracket $[e_1,e_2]=e_3$. Recall that a Lie algebra over a field $\F$ is called {\em almost abelian} 
if it is nonabelian and has an ideal of codimension $1$. Hence every almost abelian Lie algebra can be 
written as $\F^r\rtimes \F$, and is $2$-step solvable.

\begin{thm}\label{3.6}
Let $\Lg$ be the Hamiltonian Lie algebra $H(2r;\underline{n})^{(2)}$ over a field $\F$ of characteristic
$p=3$. Then, for $r>1$ the outer derivation algebra $\Out(\Lg)$ is $2$-step solvable, and for $r=1$, $1<n_1\le n_2$,
it is $3$-step solvable. More precisely, we have  
\[
\Out(\Lg)\cong 
\begin{cases} (\Lh_3(\F)\rtimes \F)\oplus \F^{\abs{\un}-2}, \hspace*{0.13cm} \text{ if } r=1,\; 1<n_1\le n_2,\\
(\F^{2r+1}\rtimes \F)\oplus \F^{\abs{\un}-2r},  \text{ if } r>1, r\equiv 0 \bmod 3, \\
(\F^{2r+1}\rtimes \F)\oplus \F^{\abs{\un}-2r},  \text{ if } r>1, r\equiv 1 \bmod 3, \\  
(\F^{2r}\rtimes \F)\oplus \F^{\abs{\un}-2r+1},   \text{ if } r>1, r\equiv 2 \bmod 3. \\
\end{cases}
\]
Here in the first case $\F$ acts on $\Lh_3(\F)$ by the derivation $D={\rm diag}(1,1,-1)$, in the second case
$\F$ acts on $\F^{2r+1}$ by the derivation $D=\id$, in the third case $\F$ acts on $\F^{2r+1}$ by the derivation
$D={\rm diag}(1,\ldots ,1,-1)$, and in the last case  $\F$ acts on $\F^{2r}$ by the derivation $D=\id$.
\end{thm}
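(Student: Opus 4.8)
The plan is to mirror the strategy used in the proofs of Theorems \ref{3.4} and \ref{3.5}, but now in the ``negative'' direction: instead of exhibiting a non-solvable subalgebra of $\Out(\Lg)$, we must identify the full derivation algebra, quotient by the inner derivations, and recognize the resulting small Lie algebra up to isomorphism. First I would invoke the description of $\Der(\Lg)$ from \cite{STR1}, Theorem $7.1.2(3)$. For the Hamiltonian algebras in characteristic $p=3$ this gives
\[
\Der(\Lg)\cong CH(2r;\underline{n})+\sum_{i=1}^{m}\sum_{j=1}^{n_i-1}\F\cdot \partial_i^{3^j}+\F\cdot d,
\]
with $CH(2r;\underline{n})=H(2r;\underline{n})\oplus\F\cdot\sum_i x_i\partial_i$ and $d$ an extra derivation appearing only in the low-rank case (as in Theorem \ref{3.5}). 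Since $\ad(\Lg)\cong\Lg$ and $\dim H(2r;\underline{n})^{(2)}=3^{|n|}-2$ while $\dim H(2r;\underline{n})=3^{|n|}$, the quotient $CH(2r;\underline{n})/\ad(\Lg)$ already contributes a fixed number of outer classes; adding the span of the $\partial_i^{3^j}$ and possibly $d$ one computes $\dim\Out(\Lg)$ case by case. The arithmetic must be checked against the survey table ($\abs{n}+2$ or $\abs{n}+1$ depending on $r\bmod 3$), which also tells us the Euler-type element $\sum x_i\partial_i$ together with the degree-raising/lowering pieces is what survives.

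Next I would pin down the bracket structure on these outer classes. The key observation, exactly as in the earlier proofs, is that $\partial_i^{3^j}$ has a matrix supported off the ``inner block'', so it is never inner; moreover these elements commute among themselves, commute with the Euler element, and their brackets with the remaining generators land inside $\ad(\Lg)$ (this is the content of the computation $[\partial_2^{3^i},V]=\ad(D(\cdots))$ in Theorem \ref{3.5}, and of \cite[Lemma 2.1.2(1)]{STR1}). Hence the span of all $\partial_i^{3^j}$ with $j\ge 2$ — more precisely, all but $2r$ or $2r-1$ of them, those being absorbed into the non-abelian factor — forms a central ideal $\F^{\abs{n}-2r}$ (resp.\ $\F^{\abs{n}-2r+1}$) complementing the rest. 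What remains is a Lie algebra of dimension $2r+2$ (resp.\ $2r+1$) generated by the Euler element, the derivation $d$, and a handful of the $\partial_i^{3}$: I would compute its bracket table directly. For $r=1$, $1<n_1\le n_2$ I expect to recover the derivations $E,F,H,V,W$-type generators of Theorems \ref{3.4}--\ref{3.5} but now with $n_1>1$ destroying the $\mathfrak{sl}_2$-triple (because the analogue of $H=[E,F]$ fails, or $E$ itself is no longer a derivation), leaving instead a Heisenberg triple $\{e_1,e_2,e_3\}$ on which the Euler element acts as $\diag(1,1,-1)$, i.e.\ $\Lh_3(\F)\rtimes\F$. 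For $r>1$ the degree-raising derivations among the $\partial_i^3$ become abelian (no bracket survives modulo inner), giving $\F^{2r+1}\rtimes\F$ or $\F^{2r}\rtimes\F$, with the Euler element acting as $\id$ or $\diag(1,\dots,1,-1)$ according to whether the extra derivation $d$ is present, which is governed by the congruence of $r$ modulo $3$ (since $d$ exists precisely when $2r+2\equiv 0\bmod p$, i.e.\ $r\equiv 2\bmod 3$).

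Concretely, the key steps in order are: (i) write down $\Der(\Lg)$ via \cite{STR1} and compute $\dim\Out(\Lg)$ in each of the four cases, matching the table; (ii) choose explicit representatives in $\Der(\Lg)$ for a basis of $\Out(\Lg)$ — the Euler element, the degree-shifting derivations coming from the bigger algebra $H(2r;\underline{n})$, the $d$-derivation when it occurs, and the $\partial_i^{3^j}$; (iii) verify, by the same divided-power computations as in Theorems \ref{3.4}--\ref{3.5}, that each candidate is genuinely non-inner (its matrix has nonzero entries outside the inner block) and that its brackets with the others are as claimed modulo $\ad(\Lg)$; (iv) identify the resulting quotient Lie algebra: split off the central $\F^{\abs{n}-2r}$ (resp.\ $\F^{\abs{n}-2r+1}$), and recognize the remainder as $\Lh_3(\F)\rtimes\F$ or $\F^{2r+1}\rtimes\F$ or $\F^{2r}\rtimes\F$ with the stated action of the grading element; (v) read off the derived length from the explicit action $D$ (an almost abelian $\F^k\rtimes\F$ is $2$-step solvable, and $\Lh_3(\F)\rtimes\F$ is $3$-step solvable).

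The main obstacle is step (iii)--(iv): showing that the brackets of the surviving generators \emph{do} fall inside $\ad(\Lg)$ rather than producing extra outer directions, and correctly sorting which of the $\partial_i^{3^j}$ get absorbed into the non-abelian factor versus which stay central. This requires a careful, case-sensitive bookkeeping of the congruence conditions (the dependence on $r\bmod 3$ and on whether $n_1>1$), together with repeated use of the divided-power identity $x^{(a)}x^{(b)}=\binom{a+b}{b}x^{(a+b)}$ and of \cite[Lemma 2.1.2]{STR1} to rewrite $[\partial_i^{3^j},-]$ as an inner derivation; the binomial coefficients modulo $3$ are exactly where the $r\equiv 2\bmod 3$ distinction (vanishing of $\binom{2r+1}{1}$, i.e.\ the Euler element acting trivially on the top basis vector) enters. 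I would also need the dimension formula $\dim CH(2;(1,n))=3^{n+1}+2$ and its rank-$2r$ generalization, which should follow from \cite[page 273]{KS} or be recomputed directly.
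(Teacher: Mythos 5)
Your high-level skeleton (Strade's Theorem 7.1.2(3) for $\Der(\Lg)$, explicit representatives, brackets computed modulo $\ad(\Lg)$, then read off the isomorphism type) is indeed the paper's strategy, but several concrete steps of your plan are wrong and would derail the bookkeeping. First, the dimension count: in all four cases of the theorem one has $\dim\Out(\Lg)=\abs{n}+2$; there is no case with $\abs{n}+1$, and no extra derivation $d$ occurs here at all. The derivation $d$ of Theorem \ref{3.5} is special to $r=1$, $n_1=1$, which is excluded in Theorem \ref{3.6}, and your criterion ``$d$ exists iff $2r+2\equiv 0\bmod p$'' is borrowed from the contact algebras, not the Hamiltonian ones. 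Consequently your proposed mechanism for the $r\equiv 2\bmod 3$ dichotomy is incorrect: it does not come from the presence or absence of an outer generator, but from a structure constant. In the paper the relevant outer classes are $C=\sum_i x_i\partial_i$ (Euler element), the restrictions $A_i$ of $\ad D_H(x_i^{3^{n_i}})$ and $B$ of $\ad D_H(x^{\tau})$ from the larger algebra $H(2r;\underline{n})$, and the $\partial_i^{3^{j_i}}$; one computes $[B,C]=(2r-1)B$, and $2r-1\equiv 0\bmod 3$ exactly when $r\equiv 2\bmod 3$, which is what moves $B$ from the almost abelian factor into the abelian summand (the total dimension never changes).

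Second, you misidentify the non-abelian factor. You assert that ``all but $2r$ or $2r-1$'' of the $\partial_i^{3^j}$ are central and the rest are ``absorbed into the non-abelian factor'', generated together with the Euler element and $d$. This contradicts your own preceding sentence (that all brackets of the $\partial_i^{3^j}$ with the remaining generators are inner) and is false: \emph{every} class $\partial_i^{3^{j_i}}$ is central in $\Out(\Lg)$, since $[A_i,\partial_i^{3^{j_i}}]$ and $[B,\partial_i^{3^{j_i}}]$ are inner, and these classes account precisely for the summand $\F^{\abs{n}-2r}$ (resp.\ $\F^{\abs{n}-2r+1}$). The abelian radical $\F^{2r+1}$ (resp.\ $\F^{2r}$, resp.\ the Heisenberg algebra) is spanned instead by the classes of $A_1,\dots,A_{2r}$ and $B$, i.e.\ by elements of $H(2r;\underline{n})$ outside $H(2r;\underline{n})^{(2)}$: for $r>1$ the brackets $[A_i,A_{i'}]$ are inner, giving the almost abelian algebras with $C$ acting by $[A_i,C]=-A_i$, $[B,C]=(2r-1)B$, whereas for $r=1$, $1<n_1\le n_2$, one gets $[A_1,A_2]=B$, producing $\Lh_3(\F)$ with $C$ acting as $\diag(1,1,-1)$ (no $E,F,H,V,W$-type generators survive in this case). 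Without correcting these three points --- the constant dimension $\abs{n}+2$, the true source of the mod-$3$ case distinction, and the correct identification of the generators of the non-abelian factor --- your outline cannot yield the stated decompositions.
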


\begin{proof}
Let us write $x^a$ for $x^{(a)}= x_1^{a_1}\cdots x_m^{a_m}$ and 
\[
\tau=(3^{n_1}-1,\dots,3^{n_m}-1)\in \N^m.
\]
By ~\cite[Theorem 7.1.2(3)(b), page 358]{STR1}, the structure of $\Der H(2r;\underline{n})^{(2)}$ is given by
\[
  \Der H(2r;\underline{n})^{(2)}\cong CH(2r;\underline{n})^{(2)} \oplus \sum_{i=1}^{2r}\sum_{0<j_i<n_i}\F \cdot \partial_i^{j_i},
\]
where
\[
  CH(2r;\underline{n})^{(2)} = H(2r;\underline{n})\oplus \F\cdot \left(\sum_{i=1}^{2r}x_i\partial_i\right).
\]
So we obtain the following dimensions:
\begin{align*}
  \dim \Der H(2r;\underline{n})^{(2)} & = \dim H(2r;\underline{n}) + 1 + |\un|-2r \\
                                     & = (3^{|\un|}-2+2r+1) + 1 + |\un| - 2r \\
                                     & = 3^{|\un|} + |\un|,
\end{align*}
see also~\cite[page 273]{KS}. So we have 
\begin{align*}
\dim \Out H(2r;\underline{n})^{(2)} & = \dim \Der H(2r;\underline{n})^{(2)}- \dim H(2r;\underline{n})^{(2)}\\
                                    & =3^{|\un|} + |\un|-(3^{|\un|}-2)\\
                                    & = |\un|+2.
\end{align*}
Consider the restrictions to $H(2r;\underline{n})^{(2)}$ of the derivations $\ad(D_H(x_i^{p^{n_i}}))$ and
$\ad(D_H(x^{\tau}))$ of the
larger Lie algebra $H(2r;\underline{n})$. They are given explicitly as the linear maps
\begin{align*}
A_i & \colon H(2r;\underline{n})^{(2)} \to H(2r;\underline{n})^{(2)},\quad D_H(x^a) \mapsto
  \delta_{a_i,0}\cdot \sigma(i)\cdot D_H(x^{a+(\tau_i-a_i)\ep_i-\ep_{i'}})\\
  B & \colon H(2r;\underline{n})^{(2)} \to H(2r;\underline{n})^{(2)} ,\quad  D_H(x^a) \mapsto \delta_{|a|,1}\cdot
   \sigma(k) \cdot D_H(x^{\tau-\ep_k})
\end{align*}
for $i=1,\dots, 2r$, and where $k\in\{1,\ldots ,2r \}$ is the only index such that $a_{k'}\neq 0$. Recall the definition
of $k'$ before Theorem $\ref{3.4}$. It is clear that $A_i,B\in \Der H(2r;\underline{n})^{(2)}$.
Moreover the derivations $C:= \sum_{i=1}^{2r}x_i\partial_i$ and $D_{i,j_i}:= \partial_i^{j_i}$ for $i=1,\cdots ,2r$
and  $0<j_i<n_i$ for each $i$ are explicitly given by
\begin{align*}
C & \colon H(2r;\underline{n})^{(2)} \to H(2r;\underline{n})^{(2)}, \quad D_H(x^a) \mapsto (|a|-2)\cdot D_H(x^a)\\
D_{i,j_i} & \colon H(2r;\underline{n})^{(2)} \to H(2r;\underline{n})^{(2)}, \quad D_H(x^a) \mapsto D_H(x^{a-p^{j_i}\ep_i}).
\end{align*}
We claim that
\[
\{A_1,\ldots , A_{2r},B,C,D_{1,1},\ldots D_{1,n_1-1},\ldots ,D_{2r,1},\ldots ,D_{2r,n_{2r}-1}\}
\]
are representatives of a basis of $\Out H(2r;\underline{n})^{(2)}$. Its cardinality is given by
$2r+2+\sum_{i=1}^{2r}n_i -2r=\abs{\un}+2$. The arguments are the same
as used in the proofs of Theorem \ref{3.4} and Theorem \ref{3.5}, i.e., one can easily check that the
intersection of the linear span of these derivations and $\ad H(2r;\underline{n})^{(2)}$ is zero. Indeed, this follows
just from comparing the images of $D_H(x_i)$ for  $i=1,\dots, 2r$, under a general inner derivation and
$\sum_{i=1}^{2r}\alpha_i A_i + \beta B + \gamma C + \sum_{i=1}^{2r}\sum_{0<j_i<n_i} \delta_{i,j_i} D_{i,j_i}$. The projections
onto $\Out H(2r;\underline{n})^{(2)}$ of $A_i$, $B$, $C$ and $D_{i,j_i}$ are then  $|n|+2$ linearly independent
derivations which therefore constitute a basis of $\Out H(2r;\underline{n})^{(2)}$. \\[0.2cm]
It is straightforward to compute the Lie brackets between the representatives in $\Der H(2r;\underline{n})^{(2)}$
of the basis vectors of $\Out H(2r;\underline{n})^{(2)}$.
The nonzero brackets are given as follows, with $1\le i<i'\le 2r$,

\begin{align*}
[A_i,A_{i'}] & =\begin{cases} B & \text{ if } r=1 \\ \ad D_H(x_i^{\tau_i}x_{i'}^{\tau_{i'}}) & \text{ if } r>1 \end{cases} \\
[A_i,C]         & = -A_i, \\
[A_i,D_{i,j_i}] & =-\ad D_H(x_i^{\tau_i-p^{j_i}+1}),\\
[B,C]          & = (2r-1)B,\\  
[B,D_{i,j_i}]   & =-\ad D_H(x^{\tau-p^{j_i}\ep_i}).\\
\end{align*}

Note that $[B,C]=0$ for the case $r\equiv 2\bmod 3$. For $r>1$, the Lie brackets yield a direct sum of an
almost abelian Lie algebra $\F^{2r+1}\rtimes \F$ (or $\F^{2r}\rtimes \F$ for $r\equiv 2\bmod 3$), and an abelian
Lie algebra. Hence $\Out(\Lg)$ is $2$-step solvable in this case. For $r=1$ we have $[C,B]=-B$, $[C,A_i]=A_i$ for
$i=1,2$, and $[A_1,A_2]=B$, so that
\[
\Out H(2r;\underline{n})^{(2)}\cong {\rm span}(A_1,A_2,B,C) \oplus {\rm span}(D_{i,j_i})
\cong  (\Lh_3(\F)\rtimes \F)\oplus \F^{\abs{\un}-2}.
\]
The ideal $\La={\rm span}(A_1,A_2,B,C)$ satisfies $\La^{(1)}={\rm span}(A_1,A_2,B)$, $\La^{(2)}={\rm span}(B)$ and $\La^{(3)}=0$.
Thus  $\Out(\Lg)$ is $3$-step solvable for $r=1$. 
\end{proof}

\subsection{Non-standard type}

There are several simple modular Lie algebras over a field of characteristic $3$ that are neither of
classical nor Cartan type. For example, the $1$-parameter family of $10$-dimensional Kostrikin algebras $L(\ep)$, the Ermolaev algebras
$R(\underline{n})$, the Brown-Kuznetsov algebras $T(n)$, and the Skryabin algebras $X(\underline{n})$ and $Y(\underline{n})$.
Chan Nam Zung studied their properties in \cite{CNZ}, published in $1993$. He computed the outer 
derivation algebras of these algebras. It turns out that we do not obtain any new counterexample to the Zassenhaus conjecture.
The following table gives a survey.

\vspace*{0.5cm}
\begin{center}
\begin{tabular}{c|cccc}
$\Lg$ & conditions & $\dim (\Lg)$ & $\dim \Out(\Lg)$ & $\Out(\Lg)$ \\[2pt]
\hline
$L(\ep)$ & $\ep\in \F$  &  $10$  & $0$  & abelian  \\[4pt]
$R(\underline{n})$      & $\underline{n}=(n_1,n_2)\in \N^2$ & $3^{\abs{\un}+1}-1$   & $\abs{\un}+1$  & abelian  \\[4pt]
$T(n)$ & $n\in \N$  & $2\cdot 3^{n+1}$ & $n-1$ & abelian  \\[4pt]
$X(\underline{n})$   & $\underline{n}=(n_1,n_2,n_3)\in \N^3$  & $3^{\abs{\un}+1}-4$   & $\abs{\un}+1$ & solvable \\[4pt]  
$Y(\underline{n})$   & $\underline{n}=(n_1,n_2,n_3)\in \N^3$ & $2\cdot 3^{\abs{\un}+1}$ & $\abs{\un}-3$ & abelian  \\  
\end{tabular}
\end{center}
\vspace*{0.5cm}

However, there are three further infinite families of simple Skryabin algebras in characteristic three, 
denoted by $Z'(\underline{n})$, and $X_i(\underline{n},\om)$, for $i=1,2$ of type $1$ and type $2$, see \cite{SKR}.
Zung does not determine the outer derivation algebras of these families in \cite{CNZ}. He mentions that the determination
for $Z'(\underline{n})$ is still an open problem. However, this  was solved $2001$ in \cite{KUM}. The outer derivation
algebra is abelian. Unfortunately we could not find a result for the algebras $X_i(\underline{n},\om)$. 
But we believe that the outer derivation algebra will be solvable, too. Let us explain the result of 
\cite{KUM} on the derivation algebra of $Z'(\underline{n})$.
In the construction of the Lie algebra $Z'(\underline{n})$, Skryabin introduces a Lie algebra
$Z(\underline{n})$ of dimension $3^{\abs{\un}+2}+1$ with
\[
Z'(\underline{n})=[Z(\underline{n}),Z(\underline{n})]. 
\]
Using this notation, the result of \cite{KUM} is as follows, see Corollary $1$ on page $3925$.

\begin{prop}
Let $\Lg=Z'(\underline{n})$, with $\underline{n}=(n_1,n_2,n_3)\in \N$. Then we have
\[
\Der(\Lg)\cong \ov{\Lg_{\ov{0}}}+Z(\underline{n}).
\]
\end{prop}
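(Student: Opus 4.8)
The plan is to compute $\Der(\Lg)$ for $\Lg = Z'(\underline n)$ by combining two transparent sources of derivations with a grading argument for the converse. On the one hand, since $\Lg = [Z(\underline n),Z(\underline n)]$ is an ideal of $Z(\underline n)$, the adjoint action of $Z(\underline n)$ on $\Lg$ gives a homomorphism $Z(\underline n) \to \Der(\Lg)$; its kernel is the centraliser $C_{Z(\underline n)}(\Lg)$, which vanishes because $\Lg$ generates $Z(\underline n)$ and $Z(\underline n)$ has trivial centre, so $Z(\underline n) \hookrightarrow \Der(\Lg)$ with image containing $\Inn(\Lg) = \ad \Lg$. On the other hand, Skryabin's construction equips $\Lg$ with a $\Z$-grading $\Lg = \bigoplus_i \Lg_i$; write $\Lg = \Lg_{\overline 0} \oplus \Lg_{\overline 1}$ for the underlying $\Z/2$-grading. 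The $p$-th power closure (restricted hull) $\overline{\Lg_{\overline 0}}$ of the even part acts on $\Lg$ by grading-preserving derivations, and one checks that $\overline{\Lg_{\overline 0}} + Z(\underline n)$ is a subalgebra of $\Der(\Lg)$ with $\overline{\Lg_{\overline 0}} \cap Z(\underline n) = \Lg_{\overline 0}$. The whole content of the proposition is then the reverse inclusion $\Der(\Lg) \subseteq \overline{\Lg_{\overline 0}} + Z(\underline n)$.

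For that, I would decompose an arbitrary $D \in \Der(\Lg)$ into graded components. Fix the toral element $t \in \overline{\Lg_{\overline 0}}$ implementing the $\Z$-grading; since the operator $\ad t$ on $\Der(\Lg)$ is semisimple with integer eigenvalues, $D = \sum_k D_k$ with $D_k \in \Der(\Lg)$ homogeneous of degree $k$, and it suffices to treat each $D_k$. For $k \neq 0$ the aim is to show $D_k \in \ad Z(\underline n)$. The structural inputs are that $\Lg$ is generated by its local part $\Lg_{-1}\oplus\Lg_0\oplus\Lg_1$ and that the bracket maps $\Lg_{-1}\otimes\Lg_{i+1}\to\Lg_i$ and $\Lg_1\otimes\Lg_{i-1}\to\Lg_i$ are surjective in the relevant range of $i$; granting these, $D_k$ is determined by its restriction to the local part, and one subtracts a suitable $\ad u$ with $u \in Z(\underline n)_k$ so that $D_k$ annihilates the local part, hence all of $\Lg$, forcing $D_k = \ad u$.

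The degree-zero component $D_0$ is the crux. It preserves each $\Lg_i$, so it restricts to an endomorphism of the $\Lg_0$-module $\Lg_{-1}$ intertwining the action up to the derivation rule, and likewise acts on $\Lg_0$. Using the (near-)irreducibility of the low-degree $\Lg_0$-modules and a Schur-type argument, one shows that modulo $\ad \Lg_0$ the map $D_0$ is confined to the span of the grading operator $t$ together with the remaining $p$-th-power derivations, i.e. $D_0 \in \overline{\Lg_{\overline 0}} + \ad Z(\underline n)_0$. Assembling the three cases yields $\Der(\Lg) = \overline{\Lg_{\overline 0}} + Z(\underline n)$; a dimension count using $\dim Z(\underline n) = 3^{\abs{n}+2}+1$ and the known dimension of $\overline{\Lg_{\overline 0}}$ then confirms the stated isomorphism and, incidentally, that $\Out(\Lg)$ is abelian.

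I expect the degree-zero step to be the main obstacle. In characteristic $3$ the relevant low-degree $\Lg_0$-modules may fail to be irreducible and the invariant forms used to prove surjectivity of the bracket maps can degenerate, so the ``soft'' parts of the argument break down and one must fall back on Skryabin's explicit structure constants for $Z(\underline n)$; separating the genuinely new derivations of $\overline{\Lg_{\overline 0}}$ from inner ones is exactly where that explicit information is unavoidable. A secondary technical point is establishing the generation-in-low-degrees and surjectivity-of-bracket-maps properties that drive the case $k \neq 0$.
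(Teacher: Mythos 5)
Your proposal does not match anything in the paper, because the paper does not prove this proposition at all: it is quoted from Kuznetsov--Mulyar \cite{KUM} (Corollary 1, p.\ 3925), and the authors only use it to deduce that $\Out(Z'(\underline{n}))$ is abelian via $[\Der(\Lg),\Der(\Lg)]\subseteq [Z(\underline{n}),Z(\underline{n})]=Z'(\underline{n})\cong\ad(\Lg)$. So a blind proof here would have to reprove the Kuznetsov--Mulyar computation, and your text does not do that: it is a plan whose decisive steps are left open. The easy inclusion $\ov{\Lg_{\ov{0}}}+Z(\underline{n})\subseteq\Der(\Lg)$ is fine in spirit, but the reverse inclusion -- which, as you say yourself, is the whole content -- is only sketched. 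The degree-zero case is explicitly deferred to ``a Schur-type argument'' that you immediately concede may fail in characteristic $3$; since $Z'(\underline{n})$ exists \emph{only} in characteristic $3$, there is no generic case to retreat to, and the argument as written simply does not close. The nonzero-degree case likewise rests on unverified structural inputs (generation by the local part, surjectivity of the bracket maps $\Lg_{\pm1}\otimes\Lg_i\to\Lg_{i\pm1}$), and the identities $\ov{\Lg_{\ov{0}}}\cap Z(\underline{n})=\Lg_{\ov{0}}$ and the dimension of $\ov{\Lg_{\ov{0}}}$ are asserted without proof; all of these require Skryabin's explicit construction of $Z(\underline{n})$, which never enters your argument.

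One step is also faulty as stated: you argue that the map $Z(\underline{n})\to\Der(\Lg)$ is injective ``because $\Lg$ generates $Z(\underline{n})$ and $Z(\underline{n})$ has trivial centre.'' But $\Lg=[Z(\underline{n}),Z(\underline{n})]$ is a \emph{proper} ideal (of codimension $3$, since $\dim Z(\underline{n})=3^{\abs{n}+2}+1$ and $\dim\Lg=3^{\abs{n}+2}-2$), and the subalgebra generated by an ideal is the ideal itself, so $\Lg$ does not generate $Z(\underline{n})$; moreover the kernel is the centraliser $C_{Z(\underline{n})}(\Lg)$, not the centre, so triviality of the centre does not suffice. The intended conclusion is presumably true (e.g.\ $C_{Z(\underline{n})}(\Lg)$ is an ideal meeting the simple ideal $\Lg$ trivially, and one must check that $Z(\underline{n})$ has no such nonzero ideal), but it needs a correct argument. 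A minor further point: in characteristic $3$ you cannot recover the $\Z$-grading of $\Der(\Lg)$ from eigenvalues of $\ad t$ (they lie in $\F_3$); instead use the standard fact that the derivation algebra of a finite-dimensional $\Z$-graded algebra is itself $\Z$-graded. In sum, the skeleton of your plan is the standard filtration/grading method one would expect, but as a proof it has genuine gaps exactly at the points where the hard, characteristic-$3$-specific work of \cite{KUM} is done.
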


Here $\Lg_{\ov{0}}\cong W(3,\underline{n})$ and $\ov{\Lg_{\ov{0}}}$ denotes the $p$-closure of $\ad(\Lg_{\ov{0}})$ in
$\Der(\Lg)$. This implies that $\Out(\Lg)$ is abelian, since
\[
[\Der(\Lg),\Der(\Lg)]\subseteq [Z(\underline{n}),Z(\underline{n})]=Z'(\underline{n})\cong \ad (\Lg).
\]  
Furthermore we have the $10$-dimensional simple Lie algebras $L(\ep,\de,\rho)$ in characteristic three 
of Kostrikin \cite{KOS}, which are deformations of the algebras $L(\ep)$.
Here it is known that all derivations are inner. All known simple Lie algebras of dimension $10$ 
for $p=3$ can be realized within the family $L(\ep,\de,\rho)$, see \cite{KOS}, 
but a classification up to isomorphism is still not known. \\[0.2cm]
Finally we have the $8$-dimensional and the $29$-dimensional simple Lie algebras 
$Br_8$ and $Br_{29}$ of Brown \cite{BR3,BR2}. Both Lie algebras are central simple. A direct computation 
shows that the outer derivation algebra is abelian in each case. Surprisingly, $Br_8$ is not mentioned 
in later works on simple Lie algebras of characteristic three. Thus, for the convenience of the reader, let us give all
Lie brackets of $Br_8$ explicitly, with respect to the basis
\[
(x_1,\ldots ,x_8)=(K_{12},K_{21},K_{13},K_{31},K_{23},K_{32},H,K)
\]  
introduced in \cite{BR3} on page $440$:

\begin{align*}
[x_1,x_{2}]   & = x_{7},   & [x_2,x_{6}]  & =2x_{4},   & [x_4,x_{5}]  & = 2x_{2},\\
[x_1,x_{4}]   & = 2x_{6},  & [x_2,x_{7}]  &  = 2x_{2}, & [x_4,x_{7}]  & = x_{4}, \\
[x_1,x_{5}]   & = x_{3},   & [x_2,x_{8}]  &  = 2x_{6}, & [x_5,x_{6}]  & = x_{7}, \\
[x_1,x_7]     & = x_1,     & [x_3,x_{4}]  & = 2x_{7},  & [x_5,x_{7}]  & =x_{5},\\
[x_2,x_{3}]   & = x_{5},   & [x_3,x_{6}]  & = x_{1},   & [x_{5},x_{8}]& = x_{1},\\
[x_2,x_{5}]   & = x_{8},   & [x_3,x_{7}]  & = 2x_{3},  & [x_{6},x_{7}]& = 2x_{6}. \\
\end{align*}

This algebra is central simple and non-restricted. Its outer derivation algebra is $2$-dimensional
and abelian. Note that $Br_8$ is isomorphic to a deformed Hamiltonian algebra 
$H(2; (1,1), \omega)$, where $\omega =(1+x_1^{(2)}x_2^{(2)})(dx_1\wedge dx_2)$. For the family
of simple deformed Hamiltonian algebras $H(2r;\underline{n},\om)$ of dimension 
$p^{\abs{\underline{n}}}-1$ see \cite{STR1}, pp. $340-341$. The following table gives a survey of the 
preceding discussion.
\vspace*{0.5cm}
\begin{center}
\begin{tabular}{c|cccc}
$\Lg$ & conditions & $\dim (\Lg)$ & $\dim \Out(\Lg)$ & $\Out(\Lg)$ \\[2pt]
\hline
$Br_8$ & $-$  &  $8$  & $2$  & abelian  \\[4pt]
$L(\ep,\de,\rho)$ & $\ep,\de,\rho\in \F$ & $10$ & $0$  & abelian  \\[4pt]
$Br_{29}$ & $-$  &  $29$  & $0$  & abelian  \\[4pt]
$Z'(\underline{n})$      & $\underline{n}=(n_1,n_2,n_3)\in \N^3$ & $3^{\abs{\un}+2}-2$   & $\abs{\un}$  & abelian  \\[4pt]
$X_1(\underline{n},\om)$   & $\underline{n}=(n_1,n_2,n_3)\in \N^3$  & $3^{\abs{\un}+1}-3$   & ? & ? \\[4pt]  
$X_2(\underline{n},\om)$   & $\underline{n}=(n_1,n_2,n_3)\in \N^3$ & $3^{\abs{\un}+1}-1$ & ? & ?  \\  
\end{tabular}
\end{center}
\vspace*{0.5cm}
There are other simple Lie algebras for $p=3$, which we have not studied here, e.g.,  
deformed Hamiltonian and special Lie algebras of Cartan type for $p=3$, or other families, where no explicit
realization is known.  

\begin{rem}
We also studied the Zassenhaus conjecture for simple Lie algebras over a field of characteristic $p=2$. Here it was already
known since $1955$ that a simple constituent $J$ of dimension 
$26$ of the Lie algebra $\Lf_4$  provides a counterexample, see \cite{SCT}, and \cite{BU67} for references. 
Note that $J$ is given as the simple ideal in $\Lf_4$ generated by the short roots. We tried to find an infinite 
family of simple Lie algebras such that the algebra $J$ is the lowest-dimensional member.
One possibility is the family of simple Lie algebras $\mathfrak{si}(\mathfrak{sle}(n))$ of 
dimension $2^{2n-1}-2^{n-1}-2$ for $n\ge 3$, see \cite{KOL}, Lemma $2.2.2$. This algebra is denoted by
$\mathfrak{sh}(2n;\underline{m})$ in Purslow's thesis \cite{PUR}, Theorem $5.4.3$. 
We used Purslow's construction for $n=4$, see \cite{PUR} pp. $138-141$, to compute the outer derivation algebra of this
$118$-dimensional algebra. It is a solvable Lie algebra of derived length $5$. So it is not a counterexample, but the
derived length is higher than in all other known cases.
For $n=5$ the algebra has dimension $494$, but we could not compute the derivation algebra so far. \\[0.2cm]
We also tested the table of B. Eick in \cite{EIK} with known simple Lie algebras
up to dimension $20$, but found no counterexample there. There are various families of simple 
Lie algebras of non-standard type, and it seems to be very complicated to obtain an overview on the 
Zassenhaus conjecture here. So far, all families we have been able to
study did not yield a new counterexample.
\end{rem}

\section*{Acknowledgments}
Dietrich Burde is supported by the Austrian Science Foun\-da\-tion FWF, grant I 3248 and grant P 33811. 
Pilar P\'aez-Guill\'an is supported by the Austrian Science Foun\-da\-tion FWF, grant P 33811.
We thank the referee for suggesting several improvements of the text. We also thank Bettina Eick and Tobias Moede for help
with some computations, and Anton Mellit for providing us access to the CoCalc server for GAP computations.

\end{document}